\newcommand{\Rmnum}[1]{\expandafter\@slowromancap\romannumeral #1@}
\numberwithin{equation}{section}
\newtheorem{thm}{Theorem}
\newtheorem{defn}{Definition}
\newtheorem{lem}{Lemma}
\newtheorem{eg}{Example}
\title{Quantitative Obata's theorem in discrete setting}
\author{Shiping Liu\thanks{School of Mathematical Sciences, University of Science and Technology of China, Hefei 230026, China.
 Email address: spliu@ustc.edu.cn} \and Chiyu Zhou\thanks{School of Mathematical Sciences, University of Science and Technology of China, Hefei 230026, China. Email address: dovong@mail.ustc.edu.cn }}
\date{\today}
\begin{document}

\maketitle

\vspace{.25in}

\begin{abstract}
Under mild assumptions, we show that a connected weighted graph $G$ with lower Ricci curvature bound $K>0$ in the sense of Bakry-\'Emery and the $d$-th non-zero Laplacian eigenvalue $\lambda_d$ close to $K$, with $d$ being the maximal combinatorial vertex degree of $G$, has an underlying combinatorial structure of the $d$-dimensional hypercube graph. 
Moreover, such a graph $G$ is close in terms of Frobenius distance to a properly weighted hypercube graph.
Furthermore, we establish their closeness in terms of eigenfunctions. Our results can be viewed as discrete analogies of the almost rigidity theorem and quantitative Obata's theorem on Rimennian manifolds.

\end{abstract}
\section{Introduction}

There are quite some deep analogies between the $d$-dimensional round sphere $S_d$ and hypercube graph $H_d$. For example, in both cases, the concentration of measure converges to the Gaussian measure when the dimension $d$ tends to infinity \cite[Section 3$\frac{1}{2}$.21]{gromov}; Cheng's maximal diameter theorem for spheres also holds for hypercube graphs \cite{rigidityproperties}. However, Obata type eigenvalue rigidity theorem for hypercube graphs \cite{rigidityproperties} has certain essential differences from that for spheres. 

In Riemannian geometry,  the Lichnerowicz estimate relates Laplacian eigenvalues and Ricci curvature. In fact, for a $d$-dimensional complete Riemannian manifold $M$ with $\mathrm{Ric}\geq K>0$, its first non-zero eigenvalue $\lambda_1(M)\geq \frac{d}{d-1}K$. Obata's rigidity theorem tells that the equality holds if and only if $M$ is a $d$-dimensional round sphere of suitable radius. Recall that the unit sphere $S_d$ has constant Ricci curvature $d-1$ and its first non-zero eigenvalue $\lambda_1(S_d)$ is equal to $d$ with multiplicity $d+1$, i.e., \[d=\lambda_1(S_d)=\cdots=\lambda_{d+1}(S_d)<\lambda_{d+2}(S_d).\] Improving previous works of Petersen \cite{petersen}, Aubry \cite{aubry}
showed that a $d$-dimensional complete Riemannian manifold with $\mathrm{Ric}\geq d-1$ and $\lambda_d(M)$ close to $d$ is both Gromov-Hausdorff close and diffeomorphic to the unit sphere $S_d$. This is known as \emph{almost rigidity} theorem. For recent progress on almost rigidity results on K\"ahler manifolds, see \cite{kahler}. 

Furthermore, we recall that a function $u:S_d\to \mathbb{R}$ is an eigenfunction to $\lambda_1(S_d)$ with $\Vert u\Vert_{L^2(S_d)}=\mathrm{vol}(S_d)$ if and only if there exists a point $p\in S_d$ such that $u=\sqrt{d+1}\cos \mathrm{dist}_p$, where $\mathrm{dist}_p$ is the distance function to the point $p$ on $S_d$. Cavalletti, Mondino and Semola \cite{quantitativeobata} proved that if $u$ is a first eigenfunction of a $d$-dimensional complete Riemannian manifold $M$ with $\mathrm{Ric}\geq d-1$ and satisfies $\Vert u \Vert_{L^2(M)}=\mathrm{vol}(M)$, then there exists a point $p\in M$ such that
\[\left\Vert u-\sqrt{d+1}\cos \mathrm{dist}_p\right\Vert_{L^2(M)}\leq C(d)(\lambda_1(M)-d)^{\frac{1}{8d+4}}.\]
This improves previous works in \cite{petersen, pincementbertrand}. Such kind of results can be considered as a quantitative Obata's theorem. 

In discrete setting, an analogue of the Ricci curvature condition $\mathrm{Ric}\geq K$ is given by the Bakry-\'Emery curvature-dimension inequalities $CD(K,\infty)$ \cite{bakry,elworthy,riccicurlinyau,michael}.
Recall that the $d$-dimensional hypercube graph $H_d$ is a graph with vertex set $V=\{0,1\}^d$ and $\{u,v\}\in E$ is an edge if and only if they differ in exactly one coordinate. 
The graph $H_d$ satisfies $CD(2,\infty)$ and its first non-zero Laplacian eigenvalue is equal to $2$ with multiplicity $d$, i.e.
\[2=\lambda_1(H_d)=\cdots=\lambda_{d}(H_d)<\lambda_{d+1}(H_d).\]
However, a graph $G$ satisfying $CD(2,\infty)$ and $\lambda_1(G)=2$  is not necessarily a hypercube. That is, a directly analogue of Obata's theorem in discrete setting cannot be true. By requiring in addition that the first non-zero eigenvalue has a high multiplicity, an Obata type theorem has been established for graphs in \cite{rigidityproperties}, which will be recalled in details as Theorem \ref{rigiditytheorem} below. 

Let us still use $\mathrm{dist}_p$
to denote the combinatorial distance to a given vertex $p$ of $H_d$. Then, for any vertex $p$, the function \[\mathrm{dist}_p(\cdot)-\frac{d}{2}\] is an eigenfunction of $H_d$ to the eigenvalue $\lambda_1(H_d)$. However, for $d>1$, there exist eigenfunctions of $H_d$ corresponding to $\lambda_1(H_d)$ which can not be expressed as a function composed with the distance function, see Example \ref{examplecube}. This provides another difference between hypercube graphs and round spheres. 

Keeping those analogies and differences between hypercube graphs and round spheres in mind, we explore in this paper analogous results of the almost rigidity theorem and quantitative Obata's theorem in discrete setting.

\subsection{Statements of results}
We work in the setting of weighted graphs. A weighted graph $G=(V,w,m)$ is a triple, where $V$ is the set of vertices, $w:V\times V\to [0,\infty)$ is symmetric and vanishes on the diagonal, and $m: V\to (0,\infty)$ is a vertex measure. There is an underlying \emph{combinatorial structure} $\tilde{G}:=(V,E)$ of $G=(V,w,m)$, where the set $E$ of edges is defined as $E:=\{\{x,y\}\in V\times V: w(x,y)>0\}$. We say $G=(V,w,m)$ is connected if $\tilde{G}$ is connected. For any $x\in V$, we define its combinatorial vertex degree as $\mathrm{deg}(x):=\#\{y:\{x,y\}\in E\}$, and its weighted vertex degree as $\mathrm{Deg}(x):=\frac{1}{m(x)}\sum_{y\in V}w(x,y)$. Let us denote $\mathrm{Deg}_{\max}:=\sup_{x\in V}\mathrm{Deg}(x)$ and $\mathrm{deg}_{\max}:=\sup_{x\in V}\mathrm{deg}(x)$. We say the graph $G$ is locally finite if $\mathrm{deg}(x)<\infty$ holds for any $x\in V$.

For an edge $\{x,y\}\in E$, there are two corresponding oriented edges $(x,y)$ and $(y,x)$. We define the edge degree $q(x,y)$ of an oriented edge $(x,y)$ as $q(x,y):=w(x,y)/m(x)$. We say a weighted graph $G=(V,w,m)$ has constant edge degree $c$, if $q(x,y)=q(y,x)=c$ holds for any edge $\{x,y\}\in E$. 

It is known that a connected locally finite weighted graph satisfying Bakry-\'Emery curvature condition $CD(K,\infty)$ with $K>0$ and $\mathrm{Deg}_{\max}<\infty$ is finite \cite{diameterbounds}.  

Below is a discrete version of the Lichnerowicz eigenvalue estimate (see e.g. \cite{curvatureaspectsofgraphs,curvatureandhigherorder}). 
\begin{thm}[A discrete Lichnerowicz estimate]
    Let $G = (V,w,m)$ be a finite connected weighted graph satisfying $CD(K,\infty)$ for some $K>0$. Let~$0=\lambda_0<\lambda_1\leq\lambda_2\leq\cdots\leq \lambda_{|V|-1}$ be the eigenvalues of the graph Laplacian $-\Delta$. Then we have \(\lambda_1\geq K\).
\end{thm}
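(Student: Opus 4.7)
The plan is to carry out the classical Bakry-\'Emery argument in the discrete setting: test the pointwise inequality $CD(K,\infty)$, namely $\Gamma_2(f,f)\ge K\,\Gamma(f,f)$, against a $\lambda_1$-eigenfunction and integrate against the vertex measure $m$, so that both sides can be matched with $\lambda_1$ and $\lambda_1^2$ via two applications of integration by parts. Here $\Gamma$ and $\Gamma_2$ are the usual carr\'e du champ and iterated carr\'e du champ built from $\Delta$.

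Concretely, I would fix a nonzero $f:V\to\mathbb{R}$ with $-\Delta f=\lambda_1 f$. Since $G$ is finite and $w$ is symmetric, a direct summation by parts gives $\sum_{x\in V}m(x)\,\Delta h(x)=0$ for every $h:V\to\mathbb{R}$, and hence the discrete Green identity
\[
\sum_{x\in V} m(x)\,g(x)\,\Delta h(x)\;=\;-\sum_{x\in V} m(x)\,\Gamma(g,h)(x).
\]
Applying this with $g=h=f$ yields $\sum_x m(x)\,\Gamma(f,f)(x)=\lambda_1\|f\|^2$, where $\|\cdot\|$ denotes the norm on $L^2(V,m)$. Summing the definition $\Gamma_2(f,f)=\tfrac12\bigl(\Delta\Gamma(f,f)-2\,\Gamma(f,\Delta f)\bigr)$ against $m$ kills the $\Delta\Gamma(f,f)$ term, and a further application of the Green identity together with $\Delta(\Delta f)=\lambda_1^2 f$ gives $\sum_x m(x)\,\Gamma_2(f,f)(x)=\lambda_1^2\|f\|^2$. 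The integrated $CD(K,\infty)$ inequality then reads $\lambda_1^2\|f\|^2\ge K\lambda_1\|f\|^2$, and dividing by $\lambda_1\|f\|^2>0$ delivers $\lambda_1\ge K$.

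There is no substantial obstacle: finiteness of $G$ guarantees both the existence of a $\lambda_1$-eigenfunction of nonzero norm and the self-adjointness of $-\Delta$ on $L^2(V,m)$, while the Green identity is a routine summation by parts using symmetry of $w$. The only delicate point is the sign convention, since the paper takes $-\Delta$ (not $\Delta$) as the nonnegative operator whose spectrum is listed, so that $\Delta f=-\lambda_1 f$; this sign must be tracked carefully through the two successive integrations by parts that convert $\sum_x m(x)\,\Gamma_2(f,f)(x)$ into $\lambda_1^2\|f\|^2$.
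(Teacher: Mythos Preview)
Your argument is correct and is exactly the standard Bakry--\'Emery integration-by-parts proof. The paper itself does not supply a proof of this theorem; it merely states the result and cites external references (``see e.g.\ \cite{curvatureaspectsofgraphs,curvatureandhigherorder}''). So there is no paper-side proof to compare against in detail.

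That said, the key integrated identity you use, namely
\[
\langle \Gamma_2 f,1\rangle - K\langle \Gamma f,1\rangle \;=\; \langle \Delta f,\Delta f\rangle + K\langle f,\Delta f\rangle,
\]
does appear verbatim later in the paper, in the proof of the perturbation estimate (Theorem~\ref{Basicprop}, proof of \eqref{CD property}), where it is applied to functions in $\Lambda_0(d)$. Your computation specializes that identity to a single $\lambda_1$-eigenfunction and reads off $\lambda_1^2-K\lambda_1\ge 0$. So your approach is entirely consistent with the machinery the paper develops and uses, and nothing is missing.
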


Next, we recall the Obata type rigidity theorem proved in \cite[Theorem 2.12]{rigidityproperties}. 
\begin{thm}[A discrete Obata's rigidity theorem]\label{rigiditytheorem}
    Let $G = (V, w, m)$ be a finite connected weighted graph. Let $0 = \lambda_0 <\lambda_1 \leq \lambda_2 \leq \cdots\leq \lambda_{|V|-1}$ be the eigenvalues of the graph Laplacian $-\Delta$. Then, $G$ satisfies $CD(K, \infty)$ and $\lambda_{\mathrm{deg}_{\max}} = K$ for some $K>0$ if and only if $G$ is the $\mathrm{deg}_{\max}$-dimensional weighted hypercube graph with constant edge degree $K/2$.
\end{thm}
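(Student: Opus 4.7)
The plan is to prove both implications; the easy direction (hypercube $\Rightarrow$ the two conditions) is a direct computation, while the rigidity direction requires the bulk of the work. For the easy direction, I would exploit that the $d$-dimensional hypercube is the $d$-fold Cartesian graph product of $K_2$. The two-vertex weighted graph with constant edge degree $K/2$ has Laplacian spectrum $\{0, K\}$ and satisfies $CD(K,\infty)$ with equality on its nontrivial eigenfunction. By the spectral behaviour of Cartesian products (the spectrum of the product is the set of sums of eigenvalues of the factors), the spectrum of the weighted hypercube lies in $\{0, K, 2K, \ldots, dK\}$ with the $K$-eigenspace of dimension exactly $d$, so $\lambda_1 = \cdots = \lambda_d = K < 2K = \lambda_{d+1}$; the additivity of the Bakry-\'Emery curvature under Cartesian products then preserves the bound $CD(K,\infty)$.

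For the rigidity direction, assume $G=(V,w,m)$ is finite connected with $CD(K,\infty)$ and $\lambda_d = K$ where $d := \mathrm{deg}_{\max}$. The discrete Lichnerowicz estimate $\lambda_1 \geq K$ combined with $\lambda_1 \leq \lambda_d = K$ forces $\lambda_1 = \cdots = \lambda_d = K$, so the $K$-eigenspace $\mathcal{E}$ has $\dim \mathcal{E} \geq d$. For any $f \in \mathcal{E}$, summing the pointwise inequality $\Gamma_2(f) \geq K\Gamma(f)$ against the measure $m$ and using discrete integration by parts gives
\[
0 \leq \sum_{x\in V} m(x)\bigl[\Gamma_2(f)(x) - K\Gamma(f)(x)\bigr] = K(K-K)\|f\|_m^2 = 0,
\]
so the pointwise rigidity equality $\Gamma_2(f) = K\Gamma(f)$ must hold on all of $V$ for every $f \in \mathcal{E}$.

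Next I would analyse this pointwise equality at an arbitrary vertex $x$. The difference $\Gamma_2(f)(x) - K\Gamma(f)(x)$ is a nonnegative quadratic form $Q_x(f)$ in the one-step and two-step differences of $f$ with coefficients determined by the local structure of $G$, and the equality $Q_x(f) = 0$ for all $f \in \mathcal{E}$ constrains both the local edge weights and the combinatorics of the $2$-neighbourhood. Because $\dim \mathcal{E} \geq d$, the $d$ eigenfunctions produce $d$ linearly independent "discrete gradients" at $x$; since these gradients live in a space of dimension $\mathrm{deg}(x) \leq d$, we must have $\mathrm{deg}(x) = d$, so $G$ is combinatorially $d$-regular. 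The vanishing of the diagonal part of $Q_x$ on this dimension-$d$ family forces each oriented edge degree $q(x,y)$ to equal the hypercube value $K/2$, and the vanishing of the mixed terms forces the square condition (every pair of neighbours of $x$ has a unique common neighbour other than $x$) together with the absence of triangles, both of which pin down the local structure of $H_d$.

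Finally, using the $d$ eigenfunctions I would define a coordinate map $F : V \to \mathbb{R}^d$ by $F(x) := (f_1(x), \ldots, f_d(x))$, normalise so its image lies in $\{0,1\}^d$, and check that $F$ is an isomorphism of weighted graphs onto the weighted hypercube. The main obstacle is this local-to-combinatorial step: in the Riemannian proof of Obata one exploits a Hessian identity forcing $(M,g)$ to be a sphere, but no direct discrete analogue exists, so the square condition and the regularity must be extracted indirectly from $Q_x$ by exploiting the entire $d$-dimensional eigenspace simultaneously rather than any single eigenfunction. This indirect extraction is precisely the source of the essential differences between hypercubes and spheres highlighted in the introduction.
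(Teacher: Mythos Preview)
This theorem is not proved in the present paper; it is quoted from \cite[Theorem 2.12]{rigidityproperties} and then used as a black box (most notably in the contradiction argument for Theorem~\ref{AMT}). There is therefore no proof here to compare your proposal against directly.

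That said, the paper's quantitative machinery, specialised to $\epsilon=0$, does trace out a proof of the rigidity direction, and it differs from yours at the step you yourself flag as the main obstacle. Your assertion that ``the $d$ eigenfunctions produce $d$ linearly independent discrete gradients at $x$'' is not automatic: it amounts to injectivity of the restriction map $\Lambda_0(d)\to C(B_1(x))$, and nothing in the bare equality $\Gamma_2(f)=K\Gamma(f)$ at $x$ alone gives this. The mechanism the paper uses (Theorem~\ref{Basicprop}\,(c) and Theorem~\ref{bounded norm}) is an \emph{extension formula}: from the pointwise identity one extracts, for every $z\in S_2(x)$, that $f(z)$ is determined by $f|_{B_1(x)}$, and then by iteration along the diameter $f$ is globally determined by its values on $B_1(x)$. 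This is what forces $\deg(x)=d$ everywhere. Likewise, your claim that ``the vanishing of the diagonal part of $Q_x$ forces $q(x,y)=K/2$'' is not substantiated; the route taken in the paper (Theorem~\ref{UniformProperty} at $\epsilon=0$) is instead to show that the distance function $\mathrm{dist}_{x_0}$ itself lies in $\Lambda_0(d)$, compute $\Delta\,\mathrm{dist}_{x_0}$ in two ways, and read off $d_-^{x_0}(y)=\tfrac{K}{2}\,\mathrm{dist}(y,x_0)$, which for $y\sim x_0$ gives $q(y,x_0)=K/2$. Your coordinate-map idea for the final identification with $H_d$ is reasonable, but the combinatorial input (regularity, square condition, constant edge degree) has to be secured first by one of these routes rather than by an unspecified decomposition of $Q_x$.
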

For convenience, we denote by $H_d(K/2)$ the $d$-dimensional weighted hypercube graph with constant edge degree $K/2$.
Notice that we can not replace the condition $\lambda_{\mathrm{deg}_{\max}} = K$ in Theorem \ref{rigiditytheorem}  by $\lambda_{\ell} = K$ for some positive integer $\ell<\mathrm{deg}_{\max}$. Counterexamples can be found in \cite[Example 3.2]{rigidityproperties}. This demonstrate some essential difference between the discrete and continuous Obata's rigidity theorem. 

In this paper, we aim at establishing quantitative versions of Theorem \ref{rigiditytheorem}. In order to state our results, we introduce the following class of graphs.

\begin{defn}\label{definition}
Let $D>0, \delta>1$ be two real numbers and $d$ be a positive integer. We define the set $\mathcal{G}(D,d,\delta)$ to be the collection of connected weighted graphs $G=(V,w,m)$ with $\mathrm{deg}_{\max}=d$ satisfying the following conditions:
\begin{itemize}
    \item [(i)] $\mathrm{Deg}_{\max}\leq D$;
  
    \item [(ii)] $\delta^{-1}\leq m(x)\leq \delta$, for any $x\in V$.
\end{itemize}    
\end{defn}

A natural metric to measure the similarity between two weighted graphs is the Frobenius distance $\mathrm{dist}_F$ (see Definition \ref{Frodist} below).  Our first result can be considered as an analogy to the almost rigidity result of Petersen \cite{petersen} and Aubry \cite{aubry}.

\begin{thm}\label{MainTheorem}
For any $D>0, \delta>1, d\in \mathbb{Z}_+$ and $K>0$, there exists a positive constant $\epsilon_{0}(D,K,d,\delta)$, such that any graph $G=(V,w,m)\in \mathcal{G}(D,d,\delta)$ satisfying $CD(K,\infty)$ and $\lambda_{d}\leq K+\epsilon_0$ has a combinatorial structure $\tilde{G}$ coinciding with the $d$-dimensional hypercube $H_d$.  Furthermore, the \textit{Frobenius distance} between $G$ and $H:=H_d(K/2)$ satisfies
   $$\operatorname{dist}_F(G,H)\leq C(\lambda_d-K)^{\frac{1}{2}},$$
  where $C=C(d,K)$ is a constant depending only on $d$ and $K$.
   \end{thm}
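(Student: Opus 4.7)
The theorem contains a combinatorial claim ($\tilde G = H_d$) and a quantitative claim (the Frobenius estimate). The plan is to treat them in that order: the first by compactness against Theorem~\ref{rigiditytheorem}, the second by extracting a coercivity inequality from the $CD(K,\infty)$ defect evaluated on the $d$-dimensional low-lying eigenspace.

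For the combinatorial identification, note that assumptions (i)--(ii) together with $CD(K,\infty)$ (whose known diameter estimate combined with $\mathrm{deg}_{\max}=d$ bounds $|V|$) make $\mathcal{G}(D,d,\delta)\cap\{CD(K,\infty)\}$ lie in a precompact subset once we restrict to each fixed combinatorial structure. I would argue by contradiction: if no such $\epsilon_0$ existed, some sequence $G_n$ with $\tilde G_n\neq H_d$ would satisfy $\lambda_d(G_n)\to K$. After passing to a subsequence with a common combinatorial structure $G^{\ast}$, I extract a limit weighted graph $G_\infty$ whose weights are the pointwise limits. Continuity of Laplacian eigenvalues and closedness of $CD(K,\infty)$ under pointwise convergence of weights give $\lambda_d(G_\infty)=K$ and $G_\infty$ connected (since any disconnection would drive $\lambda_1$ to zero). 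Applied to $G_\infty$, Theorem~\ref{rigiditytheorem} together with the eigenvalue count of $H_{d^\ast}(K/2)$ (eigenvalue $K$ with multiplicity $d^\ast$) forces $d^\ast:=\mathrm{deg}_{\max}(G_\infty)=d$ and $G_\infty=H_d(K/2)$. Since $\tilde{G_\infty}\subseteq G^\ast$ share the same vertex set and both have maximal vertex degree $d$, we conclude $G^\ast=H_d$, contradicting $\tilde G_n\neq H_d$.

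For the quantitative bound, assume $\tilde G=H_d$, and let $f_1,\ldots,f_d$ be eigenfunctions of $-\Delta$ for $\lambda_1,\ldots,\lambda_d$ normalized by $\|f_i\|^2:=\sum_x m(x)f_i(x)^2=1$. Using $\Delta f_i=-\lambda_i f_i$ together with the integrated Bochner identity $\sum_x m(x)\Gamma_2(f_i)(x)=\lambda_i^2$, the pointwise inequality $\Gamma_2(f_i)\ge K\Gamma(f_i)$ produces
\[
0\le\sum_{i=1}^d\sum_{x\in V}m(x)\bigl[\Gamma_2(f_i)-K\Gamma(f_i)\bigr](x)=\sum_{i=1}^d\lambda_i(\lambda_i-K)\le d(K+\epsilon_0)(\lambda_d-K).
\]
The key step is to show that the left-hand side, regarded as a functional on weighted graphs with combinatorial structure $H_d$, is coercive in the Frobenius distance to $H=H_d(K/2)$. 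One first uses standard perturbation theory on the $d$-dimensional low-lying eigenspace (whose spectral gap $\lambda_{d+1}-\lambda_d$ is uniformly bounded below by a compactness argument analogous to Part~1) to select an orthogonal basis $\{f_i\}$ that is quantitatively $L^2$-close to the canonical coordinate eigenfunctions $\chi_j(x)=(-1)^{x_j}$. Writing the pointwise defect at each vertex as an explicit nonnegative quadratic form in the deviations $w(x,y)-\tfrac{K}{2}m(x)$ and in the infinitesimal changes of $f_i-\chi_i$, and checking that the kernel of this form consists only of gauge symmetries already factored out by the identification of $H$, yields a uniform coercivity inequality
\[
\sum_{i=1}^d\sum_{x\in V}m(x)\bigl[\Gamma_2(f_i)-K\Gamma(f_i)\bigr](x)\ge c(d,K)\operatorname{dist}_F(G,H)^2,
\]
from which $\operatorname{dist}_F(G,H)\le C(d,K)(\lambda_d-K)^{1/2}$ follows.

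The main obstacle is the coercivity inequality in the last display. Its heart is an infinitesimal rigidity analysis on $H_d(K/2)$: one must verify that, modulo automorphisms of the weighted hypercube and global rescaling of $m$, the Hessian of the integrated $\Gamma_2$-defect in the weight variables is positive definite, and then extend that local positivity to a uniform constant $c(d,K)>0$. I expect the hypothesis $\mathrm{deg}_{\max}=d$ to enter decisively here, since it forbids deformations that would introduce new edges and thereby create extra zero modes of the defect form. Combined with a secondary compactness argument on the finite-dimensional space of weight configurations on $H_d$, this should pin down $c(d,K)$ as a genuine positive constant and close the proof.
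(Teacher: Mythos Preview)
Your compactness argument for the combinatorial identification is essentially the paper's (Theorem~\ref{AMT}): pass to a subsequence with fixed combinatorial structure, extract a limiting weighted graph, and invoke Theorem~\ref{rigiditytheorem}. The paper also records, as part of the same contradiction argument, that the limiting edge weights cannot vanish, yielding a uniform lower bound $\eta(D,K,d,\delta)>0$ on all edge weights once $\epsilon$ is small---this is used crucially downstream, and you should make it explicit.

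The quantitative part is where your route diverges from the paper and where there is a genuine gap. The paper does not argue via coercivity of the integrated $\Gamma_2$--defect. Instead it proves \emph{pointwise} estimates $|q(x,y)-K/2|\le C\sqrt{\epsilon}$ and $|m(x)-1|\le C\sqrt{\epsilon}$ (Theorem~\ref{UniformProperty}) by approximating each combinatorial distance function $\mathrm{dist}_{x_0}$ by an element of $\Lambda_0(d)$ (Lemma~\ref{uniformdist}, which rests on a two-ball extension identity~\eqref{extension} coming from the $z$-column of the $\Gamma_2$ matrix). The eigenvalue equation applied to this approximate eigenfunction then controls the in/out degrees $d_\pm^{x_0}$ sphere by sphere, which pins down each $q(x,y)$. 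Finally, a self-improving bootstrap reapplies Theorem~\ref{UniformProperty} with the improved parameters $D=3dK/4$, $\delta=2$, $\eta=K/8$ to strip the dependence on $D,\delta,\eta$ from the final constant.

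Your proposed coercivity inequality
\[
\sum_{i=1}^d\sum_{x}m(x)\bigl[\Gamma_2(f_i)-K\Gamma(f_i)\bigr](x)\;\ge\;c(d,K)\operatorname{dist}_F(G,H)^2
\]
is not established. Three concrete problems: (a)~the infinitesimal rigidity claim---that the Hessian of the defect at $H_d(K/2)$ is positive definite modulo gauge---is asserted, not checked; a perturbation $p$ of edge degrees for which the first $d$ eigenvalues are stationary to first order would force you to a genuine second-order computation you have not done; (b)~even granting local positivity, your ``secondary compactness argument'' to globalize it is taken over weight configurations on $H_d$, but that space is not compact without the bounds from $\mathcal G(D,d,\delta)$, so the constant produced would be $c(D,K,d,\delta)$, not $c(d,K)$; (c)~you give no mechanism to remove the $D,\delta$ dependence---the paper's bootstrap (feed the first-round estimates back in as new, universal parameter bounds) is exactly what accomplishes this, and it has no analogue in your sketch. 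Until the coercivity step is made precise and the dependence of the constant is controlled, the quantitative half of your proposal is incomplete.
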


Next, we prove an analogy of the quantitative Obata's theorem \cite{petersen,pincementbertrand,quantitativeobata} in terms of the closeness of eigenfunctions. 

\begin{thm}\label{MainTheoremDistance}
  Let $G=(V,w,m)\in \mathcal{G}(D,d,\delta)$ be a graph satisfying $CD(K,\infty)$ for some $K>0$. Let $\Lambda(d)$ be the space spanned by a family of  orthonormal eigenfunctions $\varphi_1,\ldots, \varphi_d$ corresponding to $0<\lambda_1\leq \cdots\leq \lambda_d$, respectively.  Then there exists a positive constant $\epsilon_{0}(D,K,d,\delta)$, such that if $\lambda_{d}\leq K+\epsilon_0$, then for any $x_0\in V$ there exists $u\in \Lambda(d)$ such that
   $$\Vert\mathrm{dist}_{x_0}-(d/2)-u\Vert_2\leq C(\lambda_d-K)^{\frac{1}{2}},$$
   where $C=C(d,K)$ is a constant depending only on $d$ and $K$.
\end{thm}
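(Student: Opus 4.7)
The plan is to apply Theorem \ref{MainTheorem}: for $\epsilon_0$ sufficiently small, the hypotheses imply that the combinatorial structure $\tilde{G}$ coincides with the hypercube $H_d$, and the weighted graph $G$ is Frobenius-close to $H:=H_d(K/2)$ with $\mathrm{dist}_F(G,H)\leq C(d,K)(\lambda_d-K)^{1/2}$. The natural test function to use is $f_{x_0}:=\mathrm{dist}_{x_0}-d/2$, which is an \emph{exact} eigenfunction of $L_H$ with eigenvalue $K$: writing $s_j:=1-2(x_0)_j\in\{\pm 1\}$ and $\chi_j(x):=x_j$ for the coordinate functions on $\{0,1\}^d$, one checks the pointwise identity
\begin{equation*}
\mathrm{dist}_{x_0}(x)-\tfrac{d}{2}=\sum_{j=1}^d s_j\bigl(\chi_j(x)-\tfrac{1}{2}\bigr),
\end{equation*}
together with the fact that each $\chi_j-1/2$ is an eigenfunction of $L_H$ for eigenvalue $K$ (a direct computation on $H_d(K/2)$). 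Thus the problem reduces to transferring this exact identity from $H$ to $G$ quantitatively.

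The next step is to show that $f_{x_0}$ is an \emph{approximate} eigenfunction of $L_G$ with eigenvalue $K$. Since $L_H f_{x_0}=K f_{x_0}$, I write $L_G f_{x_0}-K f_{x_0}=(L_G-L_H)f_{x_0}$ and convert the Frobenius closeness provided by Theorem \ref{MainTheorem} into an operator-norm bound on the common finite-dimensional Hilbert space; here the comparability $\delta^{-1}\leq m_G(x)\leq \delta$ together with $m_H\equiv 1$ makes the $L^2(G,m_G)$ and $L^2(H,m_H)$ inner products equivalent up to constants. Combined with the crude bound $\|f_{x_0}\|_{L^2(G)}\leq C(d,\delta)$ (from $|f_{x_0}|\leq d/2$ and $|V|=2^d$), this yields
\begin{equation*}
\bigl\|L_G f_{x_0}-K f_{x_0}\bigr\|_{L^2(G)}\leq C(d,K,\delta)(\lambda_d-K)^{1/2}.
\end{equation*}

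The third step is a spectral decomposition. Expanding $f_{x_0}=\sum_{i\geq 0}c_i\varphi_i^G$ in the orthonormal eigenbasis of $L_G$ and setting $u:=\sum_{i=1}^d c_i\varphi_i\in\Lambda(d)$, I obtain
\begin{equation*}
\|f_{x_0}-u\|_{L^2(G)}^2=c_0^2+\sum_{i>d}c_i^2,\qquad \bigl\|L_G f_{x_0}-Kf_{x_0}\bigr\|_{L^2(G)}^2=\sum_{i\geq 0}c_i^2(\lambda_i^G-K)^2
\end{equation*}
by Parseval. The $i=0$ term gives $c_0^2 K^2\leq C(\lambda_d-K)$. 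For $i>d$ I would use Weyl's perturbation inequality, applied to the Frobenius-close Laplacians and the exact spectral gap $\lambda_{d+1}^H-\lambda_d^H=K$ of $H_d(K/2)$, to conclude that $\lambda_{d+1}^G\geq K+K/2$ as soon as $\epsilon_0$ is small enough; then $\sum_{i>d}c_i^2\cdot (K/2)^2\leq\sum_{i>d}c_i^2(\lambda_i^G-K)^2\leq C(\lambda_d-K)$, delivering the desired estimate.

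The step I expect to be the main obstacle is converting the Frobenius distance bound between the weighted graphs $G$ and $H$ into a clean operator-norm estimate $\|L_G-L_H\|_{\mathrm{op}}\lesssim \mathrm{dist}_F(G,H)$ on $L^2(G,m_G)$, while carefully accounting for the fact that $L_G$ and $L_H$ act as self-adjoint operators with respect to two different (but comparable) vertex measures. Once this technicality and the uniform spectral gap around $\lambda_d^G$ are in place, the remainder is the routine spectral decomposition argument above.
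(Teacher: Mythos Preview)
Your approach is correct and takes a genuinely different route from the paper's. The paper constructs an explicit element $\tilde f_0\in\Lambda_0(d)$ agreeing with $\mathrm{dist}_{x_0}$ on $B_1(x_0)$ via the restriction map $L_{x_0}$ and the extension formula \eqref{extension} (Lemma \ref{uniformdist}), and then re-invokes the self-improving step from the proof of Theorem \ref{MainTheorem} to force the constant to depend only on $d,K$. You instead take Theorem \ref{MainTheorem} as a black box, recognise $f_{x_0}=\mathrm{dist}_{x_0}-d/2$ as an exact $K$-eigenfunction of $L_H$, and run a spectral perturbation argument with the orthogonal projection of $f_{x_0}$ onto $\Lambda(d)$ serving as $u$. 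Your route is more conceptual and does not need to revisit the restriction-map machinery; the paper's route gives an explicit description of the approximant.

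One point remains to be closed: as written your constant is $C(d,K,\delta)$, not $C(d,K)$. The fix is already contained in Theorem \ref{MainTheorem}: its Frobenius bound gives $|q_G(x,y)-K/2|\leq C(d,K)\sqrt{\epsilon}$ entrywise, whence $m(x)/m(y)=q(y,x)/q(x,y)$ together with the paper's normalization $m(x^*)=1$ for some vertex $x^*$ yields $|m_G(x)-1|\leq C(d,K)\sqrt{\epsilon}$ for all $x$ (this is exactly \eqref{lastestmate} in the proof of Theorem \ref{MainTheorem}). Hence for $\epsilon_0$ small enough one may replace $\delta$ by $2$ throughout, and all your constants become $C(d,K)$. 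The same observation dissolves the ``main obstacle'' you flag: conjugating $-\Delta_G$ by $M_G^{1/2}:=\mathrm{diag}(m_G^{1/2})$ produces a genuinely symmetric matrix whose off-diagonal entries are $w_G(x,y)/\sqrt{m_G(x)m_G(y)}$, hence $O_{d,K}(\sqrt{\epsilon})$-close in Frobenius norm to $-\Delta_H$; Weyl's inequality then applies directly and gives $\lambda_{d+1}^G\geq 2K-C(d,K)\sqrt{\epsilon}\geq 3K/2$ once $\epsilon_0$ is small, exactly the spectral gap your last step needs.
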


Notice that we cannot replace the assumption $\lambda_d\leq K+\epsilon_0$ in Theorem \ref{MainTheoremDistance} by $\lambda_\ell\leq K+\epsilon_0$ for some $\ell<d$. Indeed, there exist graphs $G=(V,w,m)\in \mathcal{G}(D,d,\delta)$ satisfying $CD(K,\infty)$, such that $\lambda_1=\cdots=\lambda_{d-1}=K$ and none of the eigenfunctions to  $\lambda_1=\cdots=\lambda_{d-1}$ can be expressed as a function composed with the distance function. See Example \ref{exampleld} below.

\subsection{Organization of the paper}
    The paper is organized as follows:
In Section \ref{prelim},  we collect basics on graph Laplacian, Bakry-\'Emery curvature, Frobenius distance, etc. 
 In Section \ref{Geostab}, we analyze the stability of eigenfunctions under perturbation of the first $d$ non-zero eigenvalues of a graph $G\in \mathcal{G}(D,d,\delta)$ satisfying $CD(K,\infty)$. In Section \ref{QOT}, we prove Theorem \ref{MainTheorem} and Theorem \ref{MainTheoremDistance}.

\section{Preliminaries}\label{prelim}
Let $G=(V,m,w)$ be a locally finite weighted graph. For two vertices $x,y$, we write $x\sim y$ if $w(x,y)>0$. We define the \textit{combinatorial distance} between two vertices $x$ and $y$ by $\mathrm{dist}(x, y) := \min\{n \ \vert \ \text{there exist } x = x_0\sim \cdots\sim x_n = y\}$. The \textit{diameter} is defined as $\mathrm{diam}(G):=\sup_{x,y\in V}d(x,y)$. We denote by $S_k(x) := \{y \in V \ \vert \ \mathrm{dist}(x, y) = k\}$ and $B_k(x) := \{y \in V \ \vert \ \mathrm{dist}(x, y) \leq k\}$ the sphere and ball of radius $k$ round $x \in V$, respectively.

\subsection{Bakry-\'Emery curvature}
    We define the function space $C(V):=\mathbb{R}^V$. The $graph~Laplacian$   $\Delta:C(V)\to C(V)$ is defined by 
    \begin{equation*}
        \begin{aligned}
            \Delta f(x):=\frac{1}{m(x)}\sum_y w(x,y)(f(y)-f(x)).
        \end{aligned}
    \end{equation*}

Next, we introduce Bakry-\'Emery $\Gamma$-calculus.
\begin{defn}
    For any two functions $f, g\in C(V)$, we define 
\begin{equation*}
    \begin{aligned}
        2\Gamma(f, g):&= \Delta(fg) - f\Delta g - g\Delta f,\\
        2\Gamma_2(f, g) &:= \Delta\Gamma(f, g) - \Gamma(f, \Delta g) - \Gamma(g, \Delta f).
    \end{aligned}
\end{equation*}
\end{defn}
We write $\Gamma(f)=\Gamma(f,f)$ and $\Gamma_2(f)=\Gamma_2(f,f)$, for simplification. Note that $\Gamma$ and $\Gamma_2$ are bilinear and it is direct to check
\[\Gamma(f)(x)=\frac{1}{m(x)}\sum_{y}w(x,y)(f(y)-f(x))^2.\]

\begin{defn}
    Let $G=(V,w,m)$ be a locally finite weighted graph. We say the graph $G$ satisfies $CD(K, N)$ for some \( K \in \mathbb{R} \) and $N \in (0, \infty]$ if,  for every functions $f\in C(V)$ and every vertex $x\in V$,
$$\Gamma_2(f)(x) \geq \frac{1}{N}(\Delta f)^2(x) + K\Gamma f(x).$$
\end{defn}
Here we use the notation $1/\infty=0$. We only use the condition $CD(K,\infty)$ in this paper. A discrete version of the Bonnet–Myers theorem is proved by \cite{diameterbounds}, improving previous results in  \cite{volumedoubling,curvatureandtransportinequalities}.

\begin{thm}[{\cite{diameterbounds}}]\label{thm:BM}
    Let $G = (V,w,m)$ be a locally finite connected weighted graph satisfying $CD(K,\infty)$ for some $K>0$ and $\mathrm{Deg}_{\max}<\infty$. Then the graph $G$ is finite. Moreover, we have 
    \begin{equation}
        \mathrm{diam}(G)\leq \frac{2\mathrm{Deg}_{\max}}{K}.
    \end{equation}
\end{thm}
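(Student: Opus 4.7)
The plan is to apply the heat semigroup $P_t = e^{t\Delta}$ to the combinatorial distance function from a fixed basepoint and to exploit the gradient contraction implied by $CD(K,\infty)$ to show that this function stays uniformly close to its long-time average. This closeness will immediately translate into a diameter bound, and finiteness will then follow from local finiteness together with the diameter bound.

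Fix $x_0 \in V$ and let $f(y) := \mathrm{dist}(x_0, y)$. The combinatorial Lipschitz property $|f(x)-f(y)|\leq 1$ whenever $x\sim y$ gives the pointwise bound $\Gamma(f)(x)\leq \mathrm{Deg}(x)\leq \mathrm{Deg}_{\max}$. The key input from $CD(K,\infty)$ is the Bakry--\'Emery gradient estimate
\[
\Gamma(P_t f) \leq e^{-2Kt}\, P_t\Gamma(f),
\]
which I would derive by differentiating $s\mapsto P_s(\Gamma(P_{t-s}f))$ on $[0,t]$ and applying $2\Gamma_2 \geq 2K\Gamma$ pointwise. Taking the supremum gives $\Vert\Gamma(P_t f)\Vert_\infty \leq e^{-2Kt}\mathrm{Deg}_{\max}$. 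Combining with the elementary Cauchy--Schwarz bound $|\Delta g(x)|^2 \leq \mathrm{Deg}(x)\,\Gamma(g)(x)$ applied to $g = P_s f$ yields $\Vert\Delta P_s f\Vert_\infty \leq \mathrm{Deg}_{\max}\,e^{-Ks}$, and integrating the identity $P_t f - f = \int_0^t \Delta P_s f\, ds$ produces the uniform closeness estimate
\[
|P_t f(x) - f(x)| \leq \mathrm{Deg}_{\max}\int_0^t e^{-Ks}\, ds \leq \frac{\mathrm{Deg}_{\max}}{K},
\]
valid for every $x\in V$ and every $t\geq 0$.

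Assuming for the moment that $G$ is finite, ergodicity of $P_t$ forces $P_t f\to \bar f$ (a constant) as $t\to\infty$, and the uniform bound above then gives $|f(x) - \bar f|\leq \mathrm{Deg}_{\max}/K$ for every $x$, hence $|f(x) - f(y)|\leq 2\mathrm{Deg}_{\max}/K$, which is the claimed diameter bound. To handle the a priori infinite case, I would run the same argument on the truncations $f_M(y) := \min(\mathrm{dist}(x_0,y), M)$, which are bounded and still 1-Lipschitz so that $\Gamma(f_M)\leq \mathrm{Deg}_{\max}$. If there were a vertex $x_n$ with $\mathrm{dist}(x_0, x_n) = n > 2\mathrm{Deg}_{\max}/K$, then applying the argument to $f_n$ would give $n = f_n(x_n) - f_n(x_0)\leq 2\mathrm{Deg}_{\max}/K < n$, a contradiction. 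This simultaneously forces finiteness (every vertex is at combinatorial distance at most $2\mathrm{Deg}_{\max}/K$ from $x_0$, so local finiteness closes the argument) and the diameter bound.

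The main obstacle, apart from routine semigroup manipulations, is setting up the analytic framework cleanly in the infinite case: one must ensure that $P_t$ and the integral identity $P_t g - g = \int_0^t \Delta P_s g\, ds$ make sense for bounded functions on a locally finite graph with $\mathrm{Deg}_{\max}<\infty$. This is a standard stochastic-completeness / bounded-Laplacian issue that is well-behaved here, since $\mathrm{Deg}_{\max}<\infty$ together with the uniform lower bound on $m$ (implicit in the locally finite weighted setting) makes $-\Delta$ a bounded operator on $\ell^\infty(V)$, so the semigroup is defined in the usual way. Once that is in place, the argument above is essentially three lines of computation driven by the single quantitative input $\Gamma_2\geq K\Gamma$.
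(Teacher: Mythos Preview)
The paper does not prove Theorem~\ref{thm:BM}; it is quoted verbatim from \cite{diameterbounds}. So there is no in-paper proof to compare against. Your heat-semigroup approach is in fact the one used in that reference, and the finite-graph part of your argument (gradient estimate $\Rightarrow$ $\|\Delta P_s f\|_\infty\le \mathrm{Deg}_{\max}e^{-Ks}$ $\Rightarrow$ $\|P_t f-f\|_\infty\le \mathrm{Deg}_{\max}/K$ $\Rightarrow$ oscillation bound after $t\to\infty$) is exactly right.

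There is, however, a genuine gap in the passage to the a priori infinite case. Truncating to $f_n$ fixes unboundedness, but the sentence ``applying the argument to $f_n$ would give $n=f_n(x_n)-f_n(x_0)\le 2\mathrm{Deg}_{\max}/K$'' still relies on $P_t f_n\to\bar f_n$ for some \emph{constant} $\bar f_n$, which you only justified when $G$ is finite (via ergodicity). On an infinite graph this is precisely what is in doubt, so the reasoning is circular as written. The obstacle you single out in the last paragraph---well-definedness of $P_t$ and of $P_tg-g=\int_0^t\Delta P_s g\,ds$---is actually trivial here, since $\mathrm{Deg}_{\max}<\infty$ alone makes $\Delta$ bounded on $\ell^\infty(V)$ (no lower bound on $m$ is needed, contrary to your parenthetical remark). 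The nontrivial missing step is the constant limit.

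The fix is short and uses only what you already have. From $\int_0^\infty\|\Delta P_s f_n\|_\infty\,ds\le \mathrm{Deg}_{\max}/K<\infty$ the pointwise limit $g(x):=\lim_{t\to\infty}P_t f_n(x)$ exists for every $x$. By local finiteness one may pass the limit through the (finite) sums defining $\Gamma$, and your bound $\|\Gamma(P_t f_n)\|_\infty\le e^{-2Kt}\,\mathrm{Deg}_{\max}$ forces $\Gamma(g)\equiv 0$; connectedness then gives $g\equiv\mathrm{const}$. Combined with $|P_t f_n-f_n|\le \mathrm{Deg}_{\max}/K$ this yields $|f_n(x)-g|\le \mathrm{Deg}_{\max}/K$ for all $x$, hence $n=f_n(x_n)-f_n(x_0)\le 2\mathrm{Deg}_{\max}/K$, and the contradiction goes through. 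Once you insert this one-line Liouville step, the proof is complete and matches the cited source.
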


\subsection{Frobenius distance between weighted graphs}
     In this paper, we employ the Frobenius distance to measure similarities between weighted graphs. This distance is widely used, see, e.g., \cite{graphsimilarity}. Recall that the \textit{Frobenius norm} for a matrix $M= ( M_{ij})$ is given by  $\|M\|_F:=\sqrt{\sum_{i,j}M_{ij}^2}$.

     For a finite weighted graph $G=(V,m,w)$, we consider the following square matrix $A_G=(A_{xy})$, where $A_{x,y}=q(x,y)$ if $x\sim y$ and $A(x,y)=0$ otherwise. Notice that $A_G$ is not necessarily symmetric. 
    
\begin{defn}[Frobenius distance]\label{Frodist}
The \textit{Frobenius distance} between two $n$-vertex weighted graphs $G$ and $H$ is defined as

$$\operatorname{dist}_F(G,H):=\:\min_\pi\|A_G^\pi-A_H\|_F,$$
where the minimization is taken over all all permutations $\pi$ of the vertex set of $G$, and the matrix $A_G^\pi$ is generated by permuting rows and columns of $A_G$ according to $\pi$.
\end{defn}

    The distance can also measure the similarity between two finite weighted graphs of different sizes by adding isolated vertices to the graph with fewer vertices.

\subsection{Equivalence of $\ell_p$-norms}
\begin{defn}  
Let $G=(V,w,m)$ be a locally finite weighted graph. Let $\emptyset\neq W\subset V$,  we define $\ell_{p}(W):=\{f\in C(W):\|f\|_{(W,p)}<\infty\}$ with 
\begin{equation}
\|f\|_{(W,p)}:=\left\{
\begin{aligned}
&\left(\sum_{x\in W}\vert f(x)\vert^p m(x)\right)^{\frac{1}{p}}\quad &\text{for } p\in [1,\infty);\\
&\sup_{x\in W}|f(x)|\quad &\text{for }p=\infty.
\end{aligned}
\right.
\end{equation}
We simply write $\|f\|_{p}$ if $W=V$. 
The space $\ell_2(V)$ is a Hilbert space with the inner product $\langle f,g\rangle:=\sum_{x\in V}f(x)g(x)m(x)$.
\end{defn}

\begin{lem}(Equivalence of norms)\label{equivalence of norm}
   Let $G=(V,w,m)\in \mathcal{G}(D,d,\delta)$ be a graph satisfying $CD(K,\infty)$ for some $K>0$. Then all norms $\Vert \cdot\Vert_p$, $p\in [1,\infty]$ of $C(V)$ are equivalent with respect to a constant $C(D,K,d,\delta,p)$.
    \end{lem}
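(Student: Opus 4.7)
The plan is to observe that graphs in $\mathcal{G}(D,d,\delta)$ satisfying $CD(K,\infty)$ with $K>0$ are \emph{finite with a uniformly bounded number of vertices}, and then deduce norm equivalence from the standard finite-dimensional argument, tracking how the constants depend on $D,K,d,\delta,p$.

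First I would apply Theorem~\ref{thm:BM}: since $\mathrm{Deg}_{\max}\leq D<\infty$ and $G$ satisfies $CD(K,\infty)$, $G$ is finite and $\mathrm{diam}(G)\leq 2D/K$. Combined with the combinatorial degree bound $\mathrm{deg}_{\max}=d$, a ball-growth estimate around any fixed vertex yields
\[
|V|\;\leq\;1+\sum_{k=1}^{\lfloor 2D/K\rfloor} d(d-1)^{k-1}\;=:\;N(D,K,d).
\]
So the cardinality $|V|$ is controlled purely by $D,K,d$.

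Next, the assumption $\delta^{-1}\leq m(x)\leq \delta$ for every $x\in V$ lets me compare $\|\cdot\|_p$ to the unweighted counting $\ell_p$-norm. For any $f\in C(V)$ and $p\in[1,\infty)$, on the one hand
\[
\|f\|_p^p\;=\;\sum_{x\in V}|f(x)|^p m(x)\;\leq\;\delta\,|V|\,\|f\|_\infty^p,
\]
so $\|f\|_p\leq (\delta N(D,K,d))^{1/p}\|f\|_\infty$; on the other hand, for any $x$, $|f(x)|^p\leq m(x)^{-1}\|f\|_p^p\leq \delta\|f\|_p^p$, giving $\|f\|_\infty\leq \delta^{1/p}\|f\|_p$. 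The case $p=\infty$ is by definition. Chaining these two-sided estimates between $\|\cdot\|_p$ and $\|\cdot\|_\infty$ for any two exponents $p,q\in[1,\infty]$ produces constants $c(D,K,d,\delta,p,q)$ with
\[
c(D,K,d,\delta,p,q)^{-1}\|f\|_q\;\leq\;\|f\|_p\;\leq\;c(D,K,d,\delta,p,q)\|f\|_q.
\]

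There is no real obstacle here; the only point requiring care is making explicit that the degree bound used in the ball-growth estimate is the \emph{combinatorial} degree $d=\mathrm{deg}_{\max}$ (not the weighted one), which is precisely what Definition~\ref{definition} provides, so all constants ultimately depend only on $D,K,d,\delta,p$ as claimed.
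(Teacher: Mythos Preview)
Your proposal is correct and follows essentially the same approach as the paper: bound the diameter via the discrete Bonnet--Myers theorem, use the combinatorial degree bound $\mathrm{deg}_{\max}=d$ to control $|V|$ by a constant $N(D,K,d)$, and then combine the measure bounds $\delta^{-1}\le m(x)\le\delta$ with finite-dimensional norm equivalence. The only difference is cosmetic: the paper simply invokes ``equivalence of norms in finite dimensional spaces'' after bounding $|V|$, whereas you spell out the explicit comparison with $\|\cdot\|_\infty$ and the resulting constants.
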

    \begin{proof}
       
     Let $f$ be any function in $C(V)$.  For $p\in [1,\infty)$, we have $$\frac{1}{\delta}\sum_{x\in V}\vert f(x)\vert^p\leq \sum_{x\in V}\vert f(x)\vert^p m(x)\leq \delta\sum_{x\in V}\vert f(x)\vert^p;$$  For $p=\infty$, the value of $\Vert f\Vert_{\infty}$ is irrelevant to $\delta$. By Theorem \ref{thm:BM}, we have $\mathrm{diam}(G)\leq \frac{2D}{K}$. Thus $\dim(C(V))=\vert V\vert\leq C(D,K,d).$
     
       By the equivalence of norms in finite dimensional spaces, we prove Lemma \ref{equivalence of norm}.    \end{proof}

Let $G=(V,w,m)\in \mathcal{G}(D,d,\delta)$ be a graph.Throughout this work, for convenience, we assume there exists a vertex $x\in V(G)$ such that $m(x)=1$.  It does not make any restriction to the graph $G$, since, in calculation of spectrum and curvature, $m(\cdot)$ appear as denominators. In Definition \ref{definition}, if we keep other conditions unchanged, condition $$\frac{1}{\delta}\leq m(x)\leq \delta, \forall x\in V(G) ~\text{with}~m(x)=1~\text{for some vertex}~x$$ can be replaced by  $$\frac{m(x)}{m(y)}=\frac{q(x,y)}{q(y,x)}\leq \delta,~\forall x\sim y.$$

\section{Geometric stability of eigenfunctions}\label{Geostab}
In Subsection \ref{pertur}, we study the stability of geometric properties of eigenfunctions. With this, in Subsection \ref{resmap}, we are able to approximate distance functions in $\Lambda_0(d)$. In Subsection \ref{verdeg}, we give estimations on vertex measure and degree weight through the properties of the distance functions.

Note that the value of the constants $C(\cdot)$ may change line by line during the proof.

\subsection{Perturbations of eigenvalues }\label{pertur}
 Let $G=(V,w,m)\in \mathcal{G}(D,d,\delta)$ be a graph satisfying $CD(K,\infty)$ for some $K>0$. Let $\Lambda_0(d)$ be the space spanned by a family of  orthonormal eigenfunctions $\varphi_0,\varphi_1,\ldots, \varphi_d$ corresponding to $0=\lambda_0<\lambda_1\leq \cdots\leq \lambda_d$ respectively. In this subsection, we study the stability of functions in  $\Lambda_0(d)$ and  give a quantitative estimate of  part of  \cite[Thoerem 3.4]{rigidityproperties}'s results under small perturbation of the first $d$ non-zero eigenvalues.

 First, we prove the following two lemmas which estimate the gradient.

\begin{lem}\label{Gamma estimate}
   Let $G=(V,w,m)\in \mathcal{G}(D,d,\delta)$ be a graph and $f\in C(V)$. There exists a constant $C(D,d,\delta)$ such that
    \begin{equation}
        \Vert\Gamma f\Vert_1\leq C(D,d,\delta)\Vert f\Vert_2^2. 
    \end{equation}
    \begin{proof}
    For any $f\in C(V)$, we have
        \begin{equation*}
            \begin{aligned}
                 \Vert\Gamma f\Vert_1=&\frac{1}{2}\sum_{x\in V} \sum_{y\sim x}\frac{w(x,y)}{m(x)}(f(y)-f(x))^2 m(x)\\
                 \leq&D\delta\sum_{x\in V} \sum_{y\sim x}(f^2(y)+f^2(x))\\
                 \leq&2Dd\delta\sum_{x\in V}f^2(x)\\
                 \leq&2Dd\delta^2\sum_{x\in V}f^2(x)m(x).
            \end{aligned}
        \end{equation*}
        We prove  Lemma \ref{Gamma estimate}.
    \end{proof}
\end{lem}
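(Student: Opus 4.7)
The argument is essentially a direct expansion of the Dirichlet-energy form. From $\Gamma f(x)=\tfrac{1}{2m(x)}\sum_{y}w(x,y)(f(y)-f(x))^2$, the $m(x)$ factor cancels in the $\ell_1$-norm (since $\Gamma f\ge 0$) and one obtains
$$\|\Gamma f\|_1 \;=\; \tfrac{1}{2}\sum_{x\in V}\sum_{y\in V}w(x,y)(f(y)-f(x))^2.$$
The plan is then to apply the elementary inequality $(a-b)^2\le 2(a^2+b^2)$ to split this double sum into two pieces, one carrying $w(x,y)f(x)^2$ and the other $w(x,y)f(y)^2$.

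For the first piece, fixing $x$ and summing over $y$ produces the factor $\sum_{y}w(x,y)=m(x)\mathrm{Deg}(x)\le D\, m(x)$, directly from the definition of the weighted vertex degree and the assumption $\mathrm{Deg}_{\max}\le D$ built into the class $\mathcal{G}(D,d,\delta)$. For the second piece, swapping the order of summation and using the symmetry $w(x,y)=w(y,x)$ reduces it to the same estimate. Adding the two bounds yields $\|\Gamma f\|_1\le 2D\|f\|_2^2$, which is of the required form $C(D,d,\delta)\|f\|_2^2$.

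There is no real obstacle here; the statement is essentially a bounded-degree Poincar\'e-type inequality and the proof is bookkeeping. The only substantive input is $\mathrm{Deg}_{\max}\le D$, and the cleanest route uses $\sum_y w(x,y)=m(x)\mathrm{Deg}(x)$ as above. An alternative, slightly lossier approach is to estimate $w(x,y)/m(x)\le D$ edge by edge, sum the inner expression using $\deg(x)\le d$, and then pass between $\sum_x f(x)^2$ and $\|f\|_2^2$ via $1/\delta\le m(x)\le\delta$; this yields a coarser constant proportional to $Dd\delta^2$ but still depends only on $(D,d,\delta)$ as claimed. Either route makes it plain that the two parameters $d$ and $\delta$ are essentially cosmetic in this particular estimate, with $D$ doing all the real work.
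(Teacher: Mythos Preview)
Your proof is correct and follows the same overall strategy as the paper: expand $\|\Gamma f\|_1$ as a double sum over edges, apply $(a-b)^2\le 2(a^2+b^2)$, and bound using the degree hypotheses. Your primary route---summing $\sum_y w(x,y)=m(x)\mathrm{Deg}(x)\le D\,m(x)$ before estimating---is in fact tighter than the paper's, yielding the constant $2D$; the paper instead bounds $w(x,y)\le D\delta$ edge-by-edge, then uses $\deg(x)\le d$ and $m(x)^{-1}\le\delta$, arriving at $2Dd\delta^2$, which is precisely the ``slightly lossier'' alternative you describe.
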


\begin{lem}(Gradient Estimation)\label{gradient estimation}
Let $G=(V,w,m)$ be a weighted graph. Let $u,f\in C(V)$ be non-negative functions satisfying $\Delta u\geq -f$, then
\begin{equation}
    \Braket{\Gamma u,1}\leq \Vert u\Vert_1 \Vert f\Vert_\infty.
\end{equation}
\end{lem}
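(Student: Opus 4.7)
The plan is to exploit the algebraic identity $2\Gamma(u) = \Delta(u^2) - 2u\,\Delta u$ (which follows directly from the definition of $\Gamma$) together with the discrete integration by parts identity $\sum_{x\in V}\Delta g(x)\,m(x) = 0$, valid for any $g\in C(V)$ (assuming enough summability, which is automatic here since the relevant graphs in later applications are finite by Theorem \ref{thm:BM}). The symmetry of $w$ is what makes the latter identity work: $\sum_x \sum_y w(x,y)(g(y)-g(x)) = 0$ upon swapping the roles of $x$ and $y$.

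Concretely, I would first write
\begin{equation*}
\langle \Gamma u, 1\rangle = \sum_{x\in V} \Gamma u(x)\, m(x) = \tfrac{1}{2}\sum_{x\in V}\bigl(\Delta(u^2)(x) - 2u(x)\Delta u(x)\bigr)\,m(x).
\end{equation*}
The first term vanishes by the integration-by-parts identity applied to $g=u^2$, leaving $\langle \Gamma u, 1\rangle = -\langle u, \Delta u\rangle$. Next, using the hypothesis $\Delta u \geq -f$, i.e.\ $-\Delta u \leq f$, together with the assumption $u\geq 0$ to preserve the inequality under multiplication, one gets $-u(x)\Delta u(x) \leq u(x) f(x)$ pointwise. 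Summing against $m$ yields $\langle \Gamma u, 1\rangle \leq \langle u, f\rangle$.

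Finally, the elementary bound $\langle u, f\rangle = \sum_x u(x)f(x)m(x) \leq \|f\|_\infty \sum_x u(x) m(x) = \|u\|_1\|f\|_\infty$, which again uses $u\geq 0$, finishes the proof. There is no real obstacle here; the only subtle point is making sure the summation $\sum_x \Delta(u^2)(x)m(x) = 0$ is well-defined, but this is either immediate in the finite case or a standard consequence of absolute summability, which in the intended applications is guaranteed by finiteness of $V$.
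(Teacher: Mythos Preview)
Your proof is correct and follows the same route as the paper: the paper's one-line argument writes $\langle \Gamma u,1\rangle = -\langle u,\Delta u\rangle \leq \langle u,f\rangle \leq \|u\|_1\|f\|_\infty$, invoking H\"older at the last step, and you have simply spelled out the first identity via $2\Gamma(u)=\Delta(u^2)-2u\Delta u$ and the vanishing of $\sum_x \Delta(u^2)(x)m(x)$.
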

\begin{proof}
By H\"older's inequality, we have
    \begin{equation*}
    \Braket{\Gamma u,1}=-\Braket{u,\Delta u}\leq \Braket{u,f}\leq \Vert u\Vert_1 \Vert f\Vert_\infty.
\end{equation*}
We prove Lemma \ref{gradient estimation}.
\end{proof}

Now, we are ready to study the stability of eigenfunctions. 
\begin{thm}\label{Basicprop}
  Let $G=(V,w,m)\in \mathcal{G}(D,d,\delta)$ be a graph satisfying $CD(K,\infty)$ for some $K>0$. Let $\Lambda_0(d)$ be the space spanned by a family of  orthonormal eigenfunctions $\varphi_0,\varphi_1,\ldots, \varphi_d$ corresponding to $0=\lambda_0<\lambda_1\leq \cdots\leq \lambda_d$, respectively.  If $\lambda_d\leq K+\epsilon$ for some $\epsilon\in(0,1)$, we have for any $\varphi\in \Lambda_0(d)$,
\begin{enumerate}
    \item \begin{equation}
        \label{CD property}
        \Gamma_{2}\varphi\leq K\Gamma \varphi+(K+1)\Vert \varphi\Vert_2^2\delta\epsilon.
        \end{equation}
\end{enumerate}
Furthermore, if  $\mathop{\min}\limits_{w(x,y)>0} w(x,y)\geq\eta>0$, then  the following hold true:
\begin{enumerate}[(a)]
\item There exists a constant depending on $\varphi$ such that
\begin{equation}
\label{graph gradient estimation} 
\left|\Gamma\varphi-const.\right|\leq C(D,K,d,\delta,\eta)\Vert \varphi\Vert_2^2\sqrt\epsilon.
\end{equation}
\item For all $x,z$ with $d(x,z)=2$,~we have 
\begin{equation}
    \label{extension}
\left|{\varphi(z)+\varphi(x)}-\frac{2\sum_{y:x\sim y\sim z}\varphi(y)w(x,y)w(y,z)/m(y)}{\sum_{y:x\sim y\sim z}w(x,y)w(y,z)/m(y)}\right|\leq 2\Vert \varphi\Vert_2(K+1)^{\frac{1}{2}}\delta^{\frac{3}{2}}\eta^{-1} \sqrt{\epsilon}. 
\end{equation}
\end{enumerate}
\end{thm}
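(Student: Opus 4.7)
The three claims will be attacked in order, with (1) serving as the quantitative engine for both (a) and (b). The plan for (1) is to combine the pointwise inequality $\Gamma_2\varphi \geq K\Gamma\varphi$ supplied by $CD(K,\infty)$ with an integral upper bound on the deficit. Decomposing $\varphi = \sum_{i=0}^d c_i\varphi_i$ and using the identities $\int_V \Gamma_2(\varphi)\, dm = \int_V (\Delta\varphi)^2\, dm$ and $\int_V \Gamma(\varphi)\, dm = -\int_V \varphi\Delta\varphi\, dm$ (each following from $\int_V \Delta(\cdot)\, dm = 0$ and the symmetry of $\Gamma$), one computes
\begin{equation*}
\int_V (\Gamma_2\varphi - K\Gamma\varphi)\, dm \;=\; \sum_{i=1}^d c_i^2 \lambda_i(\lambda_i - K) \;\leq\; (K+\epsilon)\epsilon\|\varphi\|_2^2.
\end{equation*}
Since the integrand is pointwise non-negative and $m(x) \geq \delta^{-1}$, multiplying by $m(x)$ and bounding by the integral will yield the pointwise estimate in (\ref{CD property}).

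For (a), the strategy is to control $\Delta(\Gamma\varphi)$ in $L^\infty$ and then invert the Laplacian modulo constants. Using $2\Gamma_2\varphi = \Delta(\Gamma\varphi) - 2\Gamma(\varphi, \Delta\varphi)$, I would rewrite
\begin{equation*}
\Delta(\Gamma\varphi) = 2(\Gamma_2\varphi - K\Gamma\varphi) + 2\Gamma(\varphi, \Delta\varphi + K\varphi).
\end{equation*}
The first term on the right is already controlled by (\ref{CD property}). For the second, the almost-eigenfunction estimate $\|\Delta\varphi + K\varphi\|_2^2 = \sum_{i=1}^d c_i^2(\lambda_i - K)^2 \leq \epsilon^2\|\varphi\|_2^2$ combined with Lemma \ref{equivalence of norm} gives $\|\Delta\varphi + K\varphi\|_\infty \leq C\epsilon\|\varphi\|_2$; applying Cauchy--Schwarz for $\Gamma$ and the trivial pointwise bound $\Gamma(g) \leq 2D\|g\|_\infty^2$ then produces $\|\Gamma(\varphi, \Delta\varphi + K\varphi)\|_\infty \leq C\epsilon\|\varphi\|_2^2$. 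Hence $\|\Delta(\Gamma\varphi)\|_\infty \leq C\epsilon\|\varphi\|_2^2$, and a Poincar\'e inequality using $\lambda_1 \geq K$ followed by another invocation of Lemma \ref{equivalence of norm} will produce $\|\Gamma\varphi - \overline{\Gamma\varphi}\|_\infty \leq C\epsilon\|\varphi\|_2^2$; for $\epsilon \in (0,1)$ this is already stronger than the claimed $\sqrt\epsilon$ bound in (\ref{graph gradient estimation}).

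The subtle part is (b), where the midpoint quantity must be located inside the quadratic form $\Gamma_2\varphi - K\Gamma\varphi$. The plan is to expand $2\Gamma_2\varphi(x) = \Delta(\Gamma\varphi)(x) - 2\Gamma(\varphi, \Delta\varphi)(x)$ as a double sum over pairs $(y,z)$ with $y\sim x$ and $z\sim y$, separate the contributions of $z=x$, triangles ($z\sim x$, $z\neq x$), and genuine distance-$2$ vertices, and then rearrange so that for each $z$ with $d(x,z)=2$ there appears a non-negative contribution of the form
\begin{equation*}
\frac{1}{2\,m(x)\,S_z}\left(\sum_{y:\, x\sim y\sim z}\frac{w(x,y)w(y,z)}{m(y)}\bigl(\varphi(x)+\varphi(z)-2\varphi(y)\bigr)\right)^{\!2},\qquad S_z := \sum_{y:\,x\sim y\sim z}\frac{w(x,y)w(y,z)}{m(y)},
\end{equation*}
with any surviving cross terms absorbed via the near-eigenfunction and near-constancy bounds from (a). Bounding this square by (\ref{CD property}), and using $S_z \geq \eta^2/\delta$ (from $w\geq\eta$ and $m\leq\delta$) together with $m(x)\leq\delta$ to invert and take a square root, will produce (\ref{extension}) with constant of the advertised order $(K+1)^{1/2}\delta^{3/2}\eta^{-1}$; the $\sqrt\epsilon$ loss reflects exactly the square root.

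The hardest step will be this algebraic rearrangement in (b): after expanding $\Gamma_2 - K\Gamma$ over the $2$-ball around $x$, the triangle and degenerate ($z=x$) contributions must be cleanly separated, and the surviving cross terms---which are not individually non-negative---must be controlled by (\ref{graph gradient estimation}) together with the $L^\infty$ bound on $\Delta\varphi+K\varphi$ used in (a). Once this organization is in place, the extraction of $\varphi(x)+\varphi(z)-2\varphi(y)$ as a perfect square, and thus the midpoint inequality, becomes essentially automatic.
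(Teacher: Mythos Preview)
Your argument for (1) is the paper's: integrate the non-negative deficit $\Gamma_2\varphi-K\Gamma\varphi$ using the spectral decomposition, then divide by $m(x)\geq\delta^{-1}$.

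For (a) you take a genuinely different route. The paper proves only the one-sided bound $\Delta(\Gamma\varphi)\geq -C\epsilon\|\varphi\|_2^2$, feeds it into Lemma~\ref{gradient estimation} to get $\langle\Gamma\Gamma\varphi,1\rangle\leq C\epsilon\|\varphi\|_2^4$, and then uses $w\geq\eta$ edge-by-edge to obtain $|\Gamma\varphi(x)-\Gamma\varphi(y)|\leq C\eta^{-1/2}\|\varphi\|_2^2\sqrt\epsilon$ along edges, whence the global statement by connectedness. Your two-sided pointwise bound $\|\Delta(\Gamma\varphi)\|_\infty\leq C\epsilon\|\varphi\|_2^2$ plus Poincar\'e and norm equivalence gives $\|\Gamma\varphi-\overline{\Gamma\varphi}\|_\infty\leq C\epsilon\|\varphi\|_2^2$, which is one order sharper in $\epsilon$ and, notably, never uses $\eta$. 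Both approaches are valid; yours leans more heavily on the finite-volume norm equivalence, while the paper's edge-by-edge argument is more local.

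For (b), however, your plan has a real gap. After you complete the square in the $S_2(x)$ variables, the ``surviving cross terms'' form a quadratic expression in $\varphi|_{B_1(x)}$ of size $O(\|\varphi\|_2^2)$, not $O(\epsilon\|\varphi\|_2^2)$; the near-constancy of $\Gamma\varphi$ and the bound on $\Delta\varphi+K\varphi$ from (a) cannot make this remainder small. What actually saves the argument is that this remainder is \emph{non-negative}---it is the Schur complement of the $S_2(x)$-block in the positive semidefinite matrix $\Gamma_2(x)-K\Gamma(x)$---but you do not invoke this. The paper sidesteps all of the bookkeeping with a discriminant trick: since $z\notin B_1(x)$ one has $\Gamma(\varphi+r1_{\{z\}})(x)=\Gamma(\varphi)(x)$ for every $r$, so applying $CD(K,\infty)$ to $\varphi+r1_{\{z\}}$ and combining with (\ref{CD property}) for $\varphi$ yields
\[
(K+1)\|\varphi\|_2^2\delta\epsilon \;+\; 2r\,\varphi_x^\top\Gamma_2(x)1_{\{z\}} \;+\; r^2(\Gamma_2(x))_{z,z}\;\geq\;0\qquad\text{for all }r\in\mathbb{R}.
\]
Non-positivity of the discriminant gives $|\varphi_x^\top\Gamma_2(x)1_{\{z\}}|^2\leq (K+1)\|\varphi\|_2^2\delta\epsilon\,(\Gamma_2(x))_{z,z}$, and dividing by $(\Gamma_2(x))_{z,z}\geq\tfrac14\delta^{-2}\eta^2$ produces (\ref{extension}) directly. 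This is equivalent to Cauchy--Schwarz for the PSD form $\Gamma_2(x)-K\Gamma(x)$ and isolates the midpoint quantity without any expansion or case analysis; in particular, part (a) is not used in the proof of (b).
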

    \begin{proof}
        First, we prove \eqref{CD property}.  Let~$0=\lambda_0<\lambda_1=K+\epsilon_1\leq\cdots\leq\lambda_d=K+\epsilon_d\leq K+\epsilon$ be the eigenvalues of the graph Laplacian $-\Delta$ and $\varphi_i$ be eigenfunction corresponding to the $i$-th eigenvalue $\lambda_i$ with  $\Braket{\varphi_i,\varphi_j}=\delta_{ij}$. We spectrally decompose $\varphi=\sum_{i=0}^d a_i\varphi_i\in \Lambda_0(d)$. Then
       \begin{equation*}
        \begin{aligned}
        \Braket{\Gamma_{2}\varphi-K\Gamma \varphi,1}=&\Braket{\Delta \varphi,\Delta \varphi}+K\Braket{\varphi,\Delta \varphi}\\
        =&\sum_{i=1}^d((K+\epsilon_i)^2 a_i^2-K(K+\epsilon_i)a_i^2)\\
        =&\sum(\epsilon_i^2+K\epsilon_i)a_i^2<(K+1)\epsilon\sum a_i^2\\
        \leq& (K+1)\epsilon\Vert \varphi\Vert_2^2.
         \end{aligned}
          \end{equation*}
       Since, for any $x\in V$, $(\Gamma_{2}\varphi-K\Gamma \varphi)(x)m(x)\leq \Braket{\Gamma_{2}\varphi-K\Gamma \varphi,1}$ and~$(\Gamma_{2}\varphi-K\Gamma \varphi)(x)\geq 0$, we prove \eqref{CD property}.
       
       Now, we prove \eqref{graph gradient estimation}. Since $G$ satisfies $CD(K,\infty)$, we have $K\Gamma \varphi\leq \Gamma_2 \varphi$.  We rewrite it as
       \begin{equation*}
           \begin{aligned}
               K\Gamma(\varphi,\sum_{i=1}^d a_i\varphi)\leq& \frac{1}{2}\Delta\Gamma(\varphi)-\Gamma(\varphi,\sum_{i=1}^d a_i\Delta\varphi_i)\\
               =&\frac{1}{2}\Delta\Gamma(\varphi)+\sum(K+\epsilon_i)\Gamma(\varphi, a_i\varphi_i).
           \end{aligned}
       \end{equation*} 
       Then,
       \begin{equation*}
           \begin{aligned}
           \frac{1}{2}\Delta\Gamma(\varphi)\geq& K\Gamma(\varphi,\sum a_i\varphi)-\sum(K+\epsilon_i)\Gamma(\varphi, a_i\varphi)\\
           \geq& -\sum\epsilon_i a_i \Gamma(\varphi,\varphi_i)\\
           \geq& -\epsilon\sum \vert a_i\vert\sqrt{\Gamma(\varphi,\varphi)}\sqrt{\Gamma(\varphi_i,\varphi_i)}\\
           \geq& -\epsilon\sum \vert a_i\vert\sqrt{\Vert\Gamma(\varphi,\varphi)\Vert_{\infty}}\sqrt{\Vert\Gamma(\varphi_i,\varphi_i)\Vert_{\infty}}.
           \end{aligned}
           \end{equation*}
            Applying Lemma \ref{Gamma estimate}, the Cauchy-Schwarz inequality, and the equivalence of norms,
           \begin{equation*}
               \begin{aligned}
                   \frac{1}{2}\Delta\Gamma(\varphi)\geq& -C(D,K,d,\delta)\Vert \varphi\Vert_2 \epsilon\sum\vert a_i\vert\Vert\varphi_i\Vert_2\\
                   \geq& -C(D,K,d,\delta)\Vert \varphi\Vert_2\epsilon\sqrt{\sum{a_i^2}}\\
                   =& -C(D,K,d,\delta)\Vert \varphi\Vert_2^2\epsilon.
               \end{aligned}
           \end{equation*}
           Applying Lemma  \ref{Gamma estimate} and Lemma \ref{gradient estimation}, we obtain
           \begin{equation*}
               \begin{aligned}
                   \Braket{\Gamma\Gamma\varphi,1}\leq C(D,K,d,\delta)\Vert \Gamma \varphi\Vert_1\Vert \varphi\Vert_2^2\epsilon\leq C(D,K,d,\delta)\Vert \varphi\Vert_2^4\epsilon.
               \end{aligned}
           \end{equation*}
           Thus, for any $x\in V $and~$y\sim x$,
            \begin{equation*}
               \begin{aligned}
               w(x,y)(\Gamma\varphi(x)-\Gamma\varphi(y))^2\leq&C(D,K,d,\delta)\Vert \varphi\Vert_2^4\epsilon\\
        \vert\Gamma\varphi(x)-\Gamma\varphi(y)\vert\leq&C(D,K,d,\delta,\eta)\Vert \varphi\Vert_2^2\sqrt{\epsilon}.
               \end{aligned}
               \end{equation*}
               Due to connectedness, we prove \eqref{graph gradient estimation}.

               Next, we prove \eqref{extension}.
               The vector $\varphi_x\in\mathbb{R}^{\#B_2(x)}$  is defined  by the restriction of the function $\varphi$ on 
$B_2(x)$. Let $\Gamma_2(x)$ be the $(\#B_2(x))\times(\#B_2(x))$  symmetric matrix such that  $\Gamma_2\varphi(x)=
\varphi_{x}^{\top}\Gamma_{2}(x)\varphi_{x}$. In fact, for a vertex $z\in S_{2}(x)$, the column $\Gamma_{2}(x)1_{\{z\}}$ of $\Gamma_{2}(x)$ is given as follows (see \cite[Section 2.3]{becurvaturefunctionsongraphs} and \cite[Section 10]{becurvaturefunctionsongraphs}): 

$$(\Gamma_{2}(x))_{x,z}=(\Gamma_{2}(x))_{z,z}=\frac{1}{4m(x)}\sum_{y:x\sim y\sim z}w(x,y)w(y,z)/m(y)\geq  \frac{1}{4}\delta^{-2}\eta^2>0;$$
        For any $y\in S_1(x)$, we have $(\Gamma_2(x))_{y,z}=-\frac{1}{2m(x)}w(x,y)w(y,z)/m(y)$ if~$ y\sim z$ and 0 otherwise. Finally, $(\Gamma_2(x))_{z^{\prime},z}=0$ for any  $z^{\prime}\in S_2(x)$ different from z. Therefore, we have for any $r\in\mathbb{R}$ 
               \begin{equation*}
                   \begin{aligned}
                      \Gamma_{2}\left(\varphi+r1_{\{z\}}\right)(x)=\varphi_{x}^{\top}\Gamma_{2}(x)\varphi_{x}+2r\varphi_{x}^{\top}\Gamma_{2}(x)1_{\{z\}}+r^{2}(\Gamma_{2}(x))_{z,z}. 
                   \end{aligned}
               \end{equation*}
   \eqref{CD property} implies
  \begin{equation*}
      \begin{aligned}
          \Gamma_{2}\left(\varphi+r1_{\{z\}}\right)(x)\leq&K\varphi_{x}^{\top}\Gamma(x)\varphi_{x}+(K+1)\Vert \varphi\Vert_2^2\delta\epsilon+2r\varphi_{x}^{\top}\Gamma_{2}(x)1_{\{z\}}+r^{2}(\Gamma_{2}(x))_{z,z}. 
      \end{aligned}
  \end{equation*}
    Since $\varphi$ and $\varphi+r1_{\{z\}}$ agree on $B_1(x)$, we derive $\Gamma(\varphi)(x)=\Gamma(\varphi+r1_{\{z\}})(x)$. We have
\begin{equation*}
    \begin{aligned}
      (K+1)  \Vert \varphi\Vert_2^2\delta\epsilon+2r\varphi_{x}^{\top}\Gamma_{2}(x)1_{\{z\}}+r^{2}(\Gamma_{2}(x))_{z,z}\geq \Gamma_{2}\left(\varphi+r1_{\{z\}}\right)(x)-K\Gamma(\varphi+r1_{\{z\}})(x)\geq 0
    \end{aligned}
\end{equation*}
stands for any $r\in\mathbb{R}$. By the discriminant formula of quadratic polynomials,
\begin{equation*}
    (2\varphi_{x}^{\top}\Gamma_{2}(x)1_{\{z\}})^2-4(K+1)\Vert \varphi\Vert_2^2\delta\epsilon(\Gamma_{2}(x))_{z,z}\leq0,
\end{equation*}
which yields

\begin{equation}
\label{GammaZZ}
    \begin{aligned}
    \vert\varphi_{x}^{\top}\Gamma_{2}(x)1_{\{z\}}\vert\leq&\Vert \varphi\Vert_2(K+1)^{\frac{1}{2}}\delta^{\frac{1}{2}}\sqrt{\epsilon}((\Gamma_{2}(x))_{z,z})^{\frac{1}{2}}\\
    \leq& \Vert \varphi\Vert_2 (K+1)^{\frac{1}{2}}\delta^{\frac{1}{2}}\sqrt{\epsilon}(\frac{\Gamma_{2}(x))_{z,z}}{\frac{1}{4}\delta^{-2}\eta^2})^{\frac{1}{2}}\frac{1}{2}\delta^{-1}\eta\\
    \leq& \Vert \varphi\Vert_2(K+1)^{\frac{1}{2}} \delta^{\frac{1}{2}}\sqrt{\epsilon}(\frac{\Gamma_{2}(x))_{z,z}}{\frac{1}{4}\delta^{-2}\eta^2})\frac{1}{2}\delta^{-1}\eta\\
    \leq& 2\Vert \varphi\Vert_2(K+1)^{\frac{1}{2}}\delta^{\frac{3}{2}}\eta^{-1}\sqrt{\epsilon}(\Gamma_{2}(x))_{z,z}.
     \end{aligned}
\end{equation}
Note that 
\begin{equation*}
    \begin{aligned}
        \varphi_{x}^{\top}\Gamma_{2}(x)1_{\{z\}} & =\frac{1}{4m(x)}(\varphi(x)+\varphi(z))\sum_{y:x\sim y\sim z}w(x,y)w(y,z)/m(y) \\
 & -\frac{1}{2m(x)}\sum_{y:x\sim y\sim z}\varphi(y)w(x,y)w(y,z)/m(y).
    \end{aligned}
\end{equation*}
Dividing each side of \eqref{GammaZZ} by $(\Gamma_{2}(x))_{z,z}=\frac{1}{4m(x)}\sum_{y:x\sim y\sim z}w(x,y)w(y,z)/m(y)$, \eqref{extension} yields.
\end{proof}

\subsection{Restriction maps}\label{resmap}
The aim of this subsection is to find a function in $\Lambda_0(d)$ which is close to the distance function $f_0:=\mathrm{dist}(x_0,\cdot)$ for some fixed vertex $x_0\in V(G)$.  The strategy is to find a function $\tilde f_0\in \Lambda_0(d)$  equal to $f_0$ on $B_1(x_0)$ and exploit the previous estimates of functions in $\Lambda_0(d)$ in Theorem \ref{Basicprop} to show that such $\tilde f_0$ is exactly the function we want.  

\begin{defn}
\label{defgamma}
    Let $G=(V,w,m)\in \mathcal{G}(D,d,\delta)$ be a graph satisfying $CD(K,\infty)$ for some $K>0$. Let $\Lambda_0(d)$ be the space spanned by a family of  orthonormal eigenfunctions $\varphi_0,\varphi_1,\ldots, \varphi_d$ corresponding to $0=\lambda_0<\lambda_1\leq \cdots\leq \lambda_d$, respectively.  A restriction map $L_x$ with respect to $x\in V$ is defined as:
    \begin{equation}
        \begin{aligned}
             L_x: \Lambda_0(d)&\longrightarrow C(B_1(x))\\
                       \varphi&\longmapsto \varphi\vert_{B_1(x)}.
        \end{aligned}
    \end{equation}
\end{defn}
The following theorem states that the restriction map $L_x$ is bijective and its inverse has bounded norm when the spectral gap deficit is sufficiently small.
\begin{thm} \label{bounded norm}
     Let $G=(V,w,m)\in \mathcal{G}(D,d,\delta)$ be a graph satisfying $CD(K,\infty)$ for some $K>0$  and $\mathop{\min}\limits_{w(x,y)>0} w(x,y)\geq\eta>0$. For any $x\in V(G)$, let $L_x$ be the restriction map from Definition \ref{defgamma}. Then there exists $\epsilon_1(D,K,d,\delta,\eta)>0$, such that for any $0<\epsilon<\epsilon_1$, if  $\lambda_d\leq K+\epsilon$, $L_x$ is bijective. i.e. $deg(x)=d$. Moreover, $\Vert L_x^{-1}\Vert_2\leq C(D,K,d,\delta,\eta)$.
\end{thm}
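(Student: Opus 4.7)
The plan is to prove injectivity of $L_x$ quantitatively; bijectivity and the equality $\mathrm{deg}(x)=d$ then follow from a dimension count, since $\dim\Lambda_0(d)=d+1$ while $\dim C(B_1(x))=1+\mathrm{deg}(x)\leq 1+\mathrm{deg}_{\max}=d+1$, so injectivity forces equality throughout. The quantitative bound $\|L_x^{-1}\|_2 \leq C$ will come from the very same propagation argument, simply tracking the size of $L_x\varphi$ instead of requiring it to vanish.

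The key engine is the extension estimate \eqref{extension}, which is essentially a discrete two-step harmonic extension formula with an explicit error proportional to $\sqrt{\epsilon}\,\|\varphi\|_2$. Fix $\varphi \in \Lambda_0(d)$ and set $M_k := \max_{v \in B_k(x)}|\varphi(v)|$. For any $z \in S_j(x)$ with $j \geq 2$, take a geodesic $x = v_0 \sim v_1 \sim \cdots \sim v_j = z$; then $\mathrm{dist}(v_{j-2},z)=2$. Applying \eqref{extension} to the pair $(v_{j-2},z)$, every $y$ with $v_{j-2}\sim y\sim z$ lies in $B_{j-1}(x)$, so
\begin{equation*}
|\varphi(z)| \;\leq\; |\varphi(v_{j-2})| \;+\; 2\max_{y}|\varphi(y)| \;+\; C_0\|\varphi\|_2\sqrt{\epsilon}\;\leq\;3M_{j-1} + C_0\|\varphi\|_2\sqrt{\epsilon},
\end{equation*}
with $C_0=2(K+1)^{1/2}\delta^{3/2}\eta^{-1}$. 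Hence $M_j \leq 3M_{j-1} + C_0\|\varphi\|_2\sqrt{\epsilon}$ for $j\geq 2$. Iterating up to $j=\mathrm{diam}(G)$, which is at most $2D/K$ by Theorem \ref{thm:BM}, produces a purely geometric constant $C_1=C_1(D,K,\delta,\eta)$ with
\begin{equation*}
\|\varphi\|_\infty = M_{\mathrm{diam}(G)} \;\leq\; C_1 M_1 + C_1\|\varphi\|_2\sqrt{\epsilon}.
\end{equation*}

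From here both conclusions follow. For injectivity, take $\varphi \in \ker L_x$, so $M_1=0$; then $\|\varphi\|_\infty \leq C_1\|\varphi\|_2\sqrt{\epsilon}$, and Lemma \ref{equivalence of norm} gives $\|\varphi\|_2 \leq C_2\|\varphi\|_\infty \leq C_1 C_2\|\varphi\|_2\sqrt{\epsilon}$, which forces $\varphi=0$ as soon as $\epsilon < \epsilon_1 := (C_1 C_2)^{-2}$. For the norm bound, keep $M_1$ and use the finite-dimensional equivalence $M_1 \leq \delta^{1/2}\|L_x\varphi\|_2$ on $B_1(x)$ together with Lemma \ref{equivalence of norm} to obtain
\begin{equation*}
\|\varphi\|_2 \;\leq\; C_2\|\varphi\|_\infty \;\leq\; C_1 C_2\delta^{1/2}\|L_x\varphi\|_2 + C_1 C_2\|\varphi\|_2\sqrt{\epsilon}.
\end{equation*}
For $\epsilon$ small enough the last term is absorbed into the left-hand side, giving $\|L_x^{-1}\|_2 \leq 2 C_1 C_2 \delta^{1/2}$.

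The main obstacle is producing a chain of distance-$2$ extensions that actually covers every vertex with a constant depending only on $(D,K,d,\delta,\eta)$. This is exactly where Theorem \ref{thm:BM} is indispensable: without the a priori diameter bound $\mathrm{diam}(G)\leq 2D/K$, the growing factor $3^{\mathrm{diam}(G)-1}$ in $C_1$ would be uncontrolled and no universal choice of $\epsilon_1$ could close the argument.
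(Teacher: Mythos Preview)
Your proof is correct and follows essentially the same approach as the paper's: both propagate control of $\varphi$ outward from $B_1(x)$ using the two-step extension estimate \eqref{extension}, iterate up to the diameter bound from Theorem~\ref{thm:BM}, and then absorb the $\sqrt{\epsilon}$ error via the equivalence of norms to obtain the lower bound $\|L_x\varphi\|_2\geq C\|\varphi\|_2$; the bijectivity and $\mathrm{deg}(x)=d$ then follow from the same dimension count you give.
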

\begin{proof}
    In fact, we will show that there exists a constant $C(D,K,d,\delta,\eta)>0$ such that
      \begin{equation}
      \label{Inverse}
          \Vert L_x(\varphi)\Vert_2=\Vert\varphi\Vert_{(B_1(x),2)}\geq C \Vert \varphi \Vert_2.
      \end{equation}
      It is direct to see that $\Vert L_x(\varphi)\Vert_2\leq1$. Since $dim(\Lambda_0)=d+1\geq \#B_1(x)$ and $L_x$ is injective, we prove the theorem.
      Now, we prove \eqref{Inverse}.
      
      For any $\varphi\in \Lambda_0(d)$, we assume $\Vert L_x(\varphi)\Vert_{\infty}=\Vert\varphi\Vert_{(B_1(x),\infty)}=:\delta_0$.
      For $z\in B_2(x)$, due to \eqref{extension}, we have
      \begin{equation*}
          \begin{aligned}
              \left| \varphi(z)\right| \leq& \left| {\varphi(z)+\varphi(x)}-\frac{2\sum_{y:x\sim y\sim z}\varphi(y)w(x,y)w(y,z)/m(y)}{\sum_{y:x\sim y\sim z}w(x,y)w(y,z)/m(y)}\right|\\
              &+\left| \varphi(x)-\frac{2\sum_{y:x\sim y\sim z}\varphi(y)w(x,y)w(y,z)/m(y)}{{\sum_{y:x\sim y\sim z}}w(x,y)w(y,z)/m(y)}\right|\\
              \leq&2\Vert \varphi\Vert_2(K+1)^{\frac{1}{2}}\delta^{\frac{3}{2}}\eta^{-1} \sqrt{\epsilon}+3\delta_0.
          \end{aligned}
      \end{equation*}
     For any integer $k\geq 3$, suppose we have $\vert \varphi(w)\vert\leq(3^{k-1}-1)\Vert \varphi\Vert_2(K+1)^{\frac{1}{2}}\delta^{\frac{3}{2}}\eta^{-1} \sqrt{\epsilon}+3^{k-1}\delta_0$, for any $w\in B_{k}(x)$. For any $z\in S_{k+1}(x)$, pick any $x_0\in S_{k-1}(x)$ such that $x_0\sim y\sim z$ for some $y\in S_{k}(x)$.  By \eqref{extension}, we derive
      \begin{equation*}
          \begin{aligned}
              \vert \varphi(z)\vert \leq& \left| {\varphi(z)+\varphi(x_0)}-\frac{2\sum_{y:x_0\sim y\sim z}\varphi(y)w(x_0,y)w(y,z)/m(y)}{\sum_{y:x_0\sim y\sim z}w(x_0,y)w(y,z)/m(y)}\right|\\
              &+\left| \varphi(x_0)-\frac{2\sum_{y:x_0\sim y\sim z}\varphi(y)w(x_0,y)w(y,z)/m(y)}{{\sum_{y:x_0\sim y\sim z}}w(x_0,y)w(y,z)/m(y)}\right|\\
              \leq&2\Vert \varphi\Vert_2(K+1)^{\frac{1}{2}}\delta^{\frac{3}{2}}\eta^{-1} \sqrt{\epsilon}+3((3^{k-1}-1)\Vert \varphi\Vert_2(K+1)^{\frac{1}{2}}\delta^{\frac{3}{2}}\eta^{-1} \sqrt{\epsilon}+3^{k-1}\delta_0)\\
              \leq& (3^{k}-1)\Vert \varphi\Vert_2(K+1)^{\frac{1}{2}}\delta^{\frac{3}{2}}\eta^{-1} \sqrt{\epsilon}+3^{k}\delta_0.
          \end{aligned}
      \end{equation*}
      Since, graph is connected and $\mathrm{diam}(G)\leq \frac{2Deg_{max}}{K}\leq \frac{2D}{K}$. Iterating the above process, we obtain 
      \begin{equation}
          \begin{aligned}
              \Vert \varphi\Vert_{\infty} \leq C(D,K,\delta,\eta)(\Vert \varphi\Vert_2\sqrt{\epsilon}+\delta_0).
          \end{aligned}
      \end{equation}
     By the equivalence of norms, we have
    \begin{equation}
          \begin{aligned}
              \Vert \varphi\Vert_{2} \leq C(D,K,d,\delta,\eta)(\Vert \varphi\Vert_2\sqrt{\epsilon}+\delta_0).
          \end{aligned}
      \end{equation}
      Picking $\sqrt{\epsilon}\leq \frac{1}{2}C^{-1}(D,K,d,\delta,\eta)$, there holds
      \begin{equation}
          \begin{aligned}
              \Vert \varphi\Vert_{2} \leq C(D,K,d,\delta,\eta)\delta_0.
          \end{aligned}
      \end{equation}
      Again, by the the equivalence of norms, we prove \eqref{Inverse}.
      \end{proof}
 
The next Lemma states that it is available to approximate distance functions in $\Lambda_0(d)$ when the spectral gap deficit is sufficiently small. It is crucial for the proof of Theorem \ref{MainTheoremDistance} and build a bridge between eigenfunctions and distance functions. 
\begin{lem}
   \label{uniformdist}
   Let $G=(V,w,m)\in \mathcal{G}(D,d,\delta)$ be a graph satisfying $CD(K,\infty)$ for some $K>0$  and $\mathop{\min}\limits_{w(x,y)>0} w(x,y)\geq\eta>0$.  For any fixed vertex $x_0 \in V(G)$, define the distance function $f_0(\cdot):= \mathrm{dist}(\cdot, x_0) $.
    
    Then, there exist constants $\epsilon_1 = \epsilon_1(D, K, d, \delta, \eta) > 0$ and $C = C(D, K, d, \delta, \eta)$ such that for all $0 < \epsilon < \epsilon_1$, if  $\lambda_d \leq K + \epsilon$,  we have $\tilde{f_0}:= L_{x_0}^{-1}(f_0 \vert_{B_1(x_0)})$ exists and
\begin{equation}
\label{eq:1}
    \Vert f_0-\tilde{f_0}\Vert_{\infty}\leq C(D,K,d,\delta,\eta)\sqrt{\epsilon}.
\end{equation}
\end{lem}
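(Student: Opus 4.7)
The plan is to exploit the fact that the distance function $f_0$ satisfies the ``two-step extension'' identity \eqref{extension} \emph{exactly} (with zero error) at sphere-separated pairs, while $\tilde{f_0}\in\Lambda_0(d)$ only satisfies it up to an $O(\sqrt\epsilon)$ error. Subtracting these identities will give a recursion for $h:=\tilde{f_0}-f_0$ that starts from $h\equiv 0$ on $B_1(x_0)$ and propagates outward with a controlled amplification. Existence of $\tilde{f_0}$ is immediate from Theorem \ref{bounded norm}, provided $\epsilon<\epsilon_1$; the same theorem also gives $\deg(x_0)=d$ and $\|\tilde{f_0}\|_2\le \|L_{x_0}^{-1}\|_2\,\|f_0|_{B_1(x_0)}\|_2\le C(d,\delta,\ldots)$, since $f_0$ only takes values $0$ and $1$ on $B_1(x_0)$.

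The key combinatorial observation is this: if $x\in S_{k-1}(x_0)$ and $z\in S_{k+1}(x_0)$ with $d(x,z)=2$, then every intermediate vertex $y$ with $x\sim y\sim z$ must lie in $S_k(x_0)$, because $d(y,x_0)$ is within $1$ of both $k-1$ and $k+1$. Consequently $f_0(x)+f_0(z)=2k$ and $f_0(y)=k$ for all such $y$, so the weighted-average expression inside \eqref{extension} vanishes for $f_0$. Applying \eqref{extension} to $\tilde f_0\in\Lambda_0(d)$ at the same configuration and subtracting gives
\begin{equation*}
\left| h(z)+h(x)-\frac{2\sum_{y:x\sim y\sim z}h(y)w(x,y)w(y,z)/m(y)}{\sum_{y:x\sim y\sim z}w(x,y)w(y,z)/m(y)}\right|\le 2\|\tilde f_0\|_2(K+1)^{1/2}\delta^{3/2}\eta^{-1}\sqrt\epsilon=:C_0\sqrt\epsilon.
\end{equation*}

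Now I would run an induction on $k$ to bound $M_k:=\max_{y\in B_k(x_0)}|h(y)|$. Every $z\in S_{k+1}(x_0)$ lies at the end of a shortest path $x_0=v_0\sim\cdots\sim v_{k+1}=z$, so one can take $x:=v_{k-1}\in S_{k-1}(x_0)$; since all $y$ in the sum above satisfy $y\in S_k(x_0)\subset B_k(x_0)$, solving for $h(z)$ yields
\begin{equation*}
|h(z)|\le |h(x)|+2\max_{y\in B_k(x_0)}|h(y)|+C_0\sqrt\epsilon \le M_{k-1}+2M_k+C_0\sqrt\epsilon.
\end{equation*}
With base case $M_0=M_1=0$, the linear recurrence $M_{k+1}\le M_{k-1}+2M_k+C_0\sqrt\epsilon$ gives $M_k\le a_k C_0\sqrt\epsilon$ with $a_k$ growing at most geometrically. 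The crucial point is that this growth is \emph{finite}: Theorem \ref{thm:BM} bounds $\mathrm{diam}(G)\le 2D/K$, so the iteration terminates after a bounded number of steps, producing a constant $C=C(D,K,d,\delta,\eta)$ with $\|h\|_\infty=M_{\mathrm{diam}(G)}\le C\sqrt\epsilon$.

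The main obstacle is bookkeeping: I have to verify that the extension identity is really applied to a valid pair $(x,z)$ with $d(x,z)=2$ at every radius, that no intermediate vertex leaks out of $S_k(x_0)$, and that the bound on $\|\tilde f_0\|_2$ built into $C_0$ is independent of $\epsilon$ (which follows from Theorem \ref{bounded norm} together with $\deg(x_0)=d$ and the uniform bounds on $m$). Once these are checked, the proof reduces to the geometric-propagation argument already familiar from the proof of Theorem \ref{bounded norm}.
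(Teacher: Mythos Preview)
Your proposal is correct and follows essentially the same approach as the paper: verify that $f_0$ satisfies the two-step extension identity exactly, subtract from \eqref{extension} applied to $\tilde f_0$, and propagate the error $h=\tilde f_0-f_0$ outward by induction on the radius, using the diameter bound from Theorem~\ref{thm:BM} to terminate. Your write-up is in fact more careful than the paper's in two places: you state the combinatorial observation for $x\in S_{k-1}(x_0)$ and $z\in S_{k+1}(x_0)$ (the paper writes $B_{k-1}$ and $B_{k+1}$, which as stated is not quite right), and you make the recursion $M_{k+1}\le M_{k-1}+2M_k+C_0\sqrt\epsilon$ explicit where the paper simply says ``By induction''.
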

\begin{proof}
    First, let $\epsilon_1$ be the $\epsilon_1(D,d,\delta,K,\eta)$ in Theorem \ref{bounded norm}. Then, $\tilde{f_0}=L_{x_0}^{-1}(f_0 \vert_{B_1(x_0)})$ exists. For all $0 < \epsilon < \epsilon_1$, we prove the Theorem as follows.

Note that, for distance function $f_0(\cdot):= \mathrm{dist}(\cdot, x_0)$, it is direct to check 
$$f_0(z)+f_0(x)=\frac{2\sum_{y:x\sim y\sim z}
            f_0(y)w(x,y)w(y,z)/m(y)}{\sum_{y:x\sim y\sim z}w(x,y)w(y,z)/m(y)},$$
        where  $z\in B_{k+1}(x_0)$ and  $x\in B_{k-1}(x_0)$, $k\in\mathbb{N}^{+}$.
        
Thus, for $z\in B_2(x_0)$, by \eqref{extension}, we have 
    \begin{equation*}
        \begin{aligned}
            \left| f_0(z)-\tilde{f_0}(z)\right|\leq\left| f_0(x)-\tilde{f_0}(x)-\frac{2\sum_{y:x\sim y\sim z}
            (f_0-\tilde{f_0})(y)w(x,y)w(y,z)/m(y)}{\sum_{y:x\sim y\sim z}w(x,y)w(y,z)/m(y)}\right|+2\Vert\tilde{f}_0\Vert_2\delta^{\frac{3}{2}}\eta^{-1} \sqrt{\epsilon}.
        \end{aligned}
    \end{equation*}
    Since $ f _0$ and $\tilde{f_0}$ agree on $B_1(x_0)$, we obtain
    \begin{equation*}
        \begin{aligned}
            \vert f_0(z)-\tilde{f_0}(z)\vert\leq 2\Vert \tilde{f_0}\Vert_2(K+1)^{\frac{1}{2}}\delta^{\frac{3}{2}}\eta^{-1} \sqrt{\epsilon}.
        \end{aligned}
    \end{equation*}
    Note that graph is connected and $\mathrm{diam}(G)\leq \frac{2Deg_{max}}{K}\leq \frac{2D}{K}$. By induction,
    \begin{equation*}
        \begin{aligned}
            \Vert f_0-\tilde{f_0}\Vert_{\infty}\leq C(D,K,d,\delta,\eta)\Vert \tilde{f}_0\Vert_2\sqrt{\epsilon}.
        \end{aligned}
    \end{equation*}
   It is straightforward to check that $ \Vert f_0\Vert_{(B_1(x),2)}\leq \sqrt{d\delta}$. Then due to Theorem \ref{bounded norm},  we prove \eqref{eq:1}.
\end{proof}
Here, we are able to give a discrete analogy of Bertrand's result. Let $G=(V,w,m)\in \mathcal{G}(D,d,\delta)$ be a graph satisfying $CD(K,\infty)$ for some $K>0$. If we only assume the first $l<d$ non-zero eigenvalues close to Lichnerowicz bound, recalling examle \ref{exampleld}, it is possible that no eigenfunctions corresponding to $\lambda_1\dots \lambda_l$ can not be expressed as a function composed with the distance function; on the other hand, the following theorem shows, as long as $deg(x_0)\leq l$, for some $ x_0\in V(G)$, distance function $\mathrm{dist}_{x_0}$ can be expressed as a combination of corresponding eigenfunctions of the first $l$ non-zero eigenvalues up to a constant.
\begin{thm}\label{GenMainTheoremDistance}
    Let $G=(V,w,m)\in \mathcal{G}(D,d,\delta)$ be a graph satisfying $CD(K,\infty)$ for some $K>0$  and 
    \begin{equation}
        \label{eq:edgeweightcondition}  \mathop{\min}\limits_{w(x,y)>0} w(x,y)\geq\eta>0. 
    \end{equation}
 Let $0<l\leq d$ be an integer. Let $\Lambda_0(l)$ be the space spanned by a family of  orthonormal eigenfunctions $\varphi_0,\varphi_1,\ldots, \varphi_l$ corresponding to $0=\lambda_0<\lambda_1\leq \cdots\leq \lambda_l$, respectively.  Then for any $x_0\in V$ with $deg(x_0)\leq \ell$ there exists $u\in \Lambda_0(l)$ such that $$\Vert \mathrm{dist}_{x_0}-u\Vert_2\leq C(\lambda_\ell-K)^{\frac{1}{2}},~\text{where}~ C=C(D,K,d,\delta,\eta).$$
\end{thm}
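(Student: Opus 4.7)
The plan is to adapt the strategy of Lemma \ref{uniformdist} to the smaller subspace $\Lambda_0(\ell)\subset\Lambda_0(d)$, splitting into cases according to the size of $\lambda_\ell-K$.

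First, I would dispose of the ``large gap'' regime. Fix a threshold $\epsilon_1=\epsilon_1(D,K,d,\delta,\eta)>0$ to be chosen later. If $\lambda_\ell-K\geq\epsilon_1$, set $u=0\in\Lambda_0(\ell)$. Theorem \ref{thm:BM} gives $\mathrm{diam}(G)\leq 2D/K$, so $\|\mathrm{dist}_{x_0}\|_\infty\leq 2D/K$, and Lemma \ref{equivalence of norm} upgrades this to $\|\mathrm{dist}_{x_0}\|_2\leq C(D,K,d,\delta)$. Choosing $C$ in the statement large enough relative to $\epsilon_1^{-1/2}$ makes the inequality $\|\mathrm{dist}_{x_0}-u\|_2\leq C(\lambda_\ell-K)^{1/2}$ trivial in this regime.

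Second, assume $\lambda_\ell-K<\epsilon_1$. I would verify that the estimates of Theorem \ref{Basicprop}, in particular \eqref{CD property} and \eqref{extension}, remain valid for every $\varphi\in\Lambda_0(\ell)$ with $\epsilon$ replaced by $\lambda_\ell-K$. The proof there expands $\varphi$ in the orthonormal eigenbasis and only invokes $\epsilon_i=\lambda_i-K\leq\lambda_\ell-K$ for the indices actually appearing, so the derivation transfers verbatim. Next, I would adapt the proof of Theorem \ref{bounded norm} to the restriction map $L_{x_0}\colon\Lambda_0(\ell)\to C(B_1(x_0))$: the iterative argument in that proof propagates control of $|\varphi(z)|$ from $B_1(x_0)$ to $V$ using only \eqref{extension}, and therefore yields $\|L_{x_0}\varphi\|_{(B_1(x_0),2)}\geq C(D,K,d,\delta,\eta)\|\varphi\|_2$ for every $\varphi\in\Lambda_0(\ell)$, provided $\epsilon_1$ is small enough. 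Injectivity of $L_{x_0}|_{\Lambda_0(\ell)}$ forces $\ell+1=\dim\Lambda_0(\ell)\leq\#B_1(x_0)=\deg(x_0)+1$, which combined with the standing hypothesis $\deg(x_0)\leq\ell$ yields the rigidity conclusion $\deg(x_0)=\ell$, so that $L_{x_0}|_{\Lambda_0(\ell)}$ is bijective with a bounded inverse.

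Third, I define $\tilde f_0:=\bigl(L_{x_0}|_{\Lambda_0(\ell)}\bigr)^{-1}\bigl(f_0|_{B_1(x_0)}\bigr)\in\Lambda_0(\ell)$, where $f_0=\mathrm{dist}(\cdot,x_0)$, and run the inductive argument of Lemma \ref{uniformdist} verbatim. The exact $2$-ball identity for the distance function, subtracted from \eqref{extension} applied to $\tilde f_0$, controls $|f_0(z)-\tilde f_0(z)|$ on $S_2(x_0)$ by a constant multiple of $\|\tilde f_0\|_2\sqrt{\lambda_\ell-K}$; iterating sphere by sphere finitely many times, using $\mathrm{diam}(G)\leq 2D/K$, yields $\|f_0-\tilde f_0\|_\infty\leq C(D,K,d,\delta,\eta)\|\tilde f_0\|_2\sqrt{\lambda_\ell-K}$. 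Since $f_0|_{B_1(x_0)}$ takes values in $\{0,1\}$, we have $\|f_0|_{B_1(x_0)}\|_{(B_1(x_0),2)}\leq\sqrt{d\delta}$, and the bounded inverse yields $\|\tilde f_0\|_2\leq C(D,K,d,\delta,\eta)$. Converting the $\ell^\infty$ bound to an $\ell^2$ bound via Lemma \ref{equivalence of norm} and setting $u=\tilde f_0$ completes the argument. The main obstacle is the second step: confirming that the injectivity argument behind Theorem \ref{bounded norm} genuinely localizes to $\Lambda_0(\ell)$ with the spectral gap measured by $\lambda_\ell-K$ alone, and then exploiting the resulting dimension inequality together with the hypothesis $\deg(x_0)\leq\ell$ to extract the rigidity $\deg(x_0)=\ell$. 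Once this is in hand, the rest is a direct transcription of Lemma \ref{uniformdist}.
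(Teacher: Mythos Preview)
Your proposal is correct and follows essentially the same approach as the paper: verify that the estimates of Theorem~\ref{Basicprop} (in particular \eqref{extension}) localize to $\Lambda_0(\ell)$, rerun the injectivity argument of Theorem~\ref{bounded norm} for the restriction map $L_{x_0}^{\ell}\colon\Lambda_0(\ell)\to C(B_1(x_0))$, combine injectivity with the hypothesis $\deg(x_0)\leq\ell$ to get bijectivity with bounded inverse, then transcribe Lemma~\ref{uniformdist}; the large-gap regime is handled trivially with $u=0$ and the diameter bound. Your explicit extraction of the rigidity $\deg(x_0)=\ell$ is a nice clarification the paper leaves implicit.
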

   
   \begin{proof}[Proof of Theorem \ref{GenMainTheoremDistance}]
       
       By the same proof of Theorem \ref{Basicprop}, If $\lambda_l<K+\epsilon$ for some $\epsilon\in(0,1)$,  \eqref{graph gradient estimation}  and \eqref{extension} hold for $\varphi\in \Lambda_0(l)$.  For any $x_0\in V(G)$ with $deg(x_0)\leq l$, we define the restriction map $L_{x_0}^{l}$ as:
\begin{equation*}
        \begin{aligned}
             L_{x_0}^l: \Lambda_0(l)&\longrightarrow C(B_1(x_0))\\
                       \varphi&\longmapsto \varphi\vert_{B_1(x_0)}.
        \end{aligned}
    \end{equation*}
   Note that here $dim(\Lambda_0(l))=l+1\geq \#B_1(x_0)$. Paralleling the proof of Theorem \ref{bounded norm}, it can be shown that $L_{x_0}^{l}$ is bijective and has a bounded inverse when $\epsilon$ is sufficiently small. Finally, Theorem \ref{GenMainTheoremDistance} follows by Lemma \ref{uniformdist}'s argument. Note that, for large $\epsilon$, we simply pick $u\equiv 0$ and estimate $\Vert \mathrm{dist}_{x_0}\Vert_{\infty}\leq {2D}/{K}.$
   
   Thus, we prove Theorem \ref{GenMainTheoremDistance}.

   \end{proof}
 Notice that for a positive integer $\ell<d$, a  graph $G\in \mathcal{G}(D,d,\delta)$ satisfying $CD(K,\infty)$ with $\lambda_1=\cdots=\lambda_\ell=K$ is not necessarily a hypercube, see \cite[Example 3.2]{rigidityproperties}. Therefore, it is natural that the constant $C$ in Theorem \ref{GenMainTheoremDistance} might depend on more parameters. We remark that the condition \eqref{eq:edgeweightcondition} is also necessary. Since, if an edge with arbitrary small weight is added to a graph $G$, its curvature and eigenvalues only perturb a little while its combiatorial distance can vary dramatically. 
\subsection{Estimations of edge degree and vertex measure}\label{verdeg}

      In this subsection, we prove Theorem \ref{UniformProperty} which states that the edge degree and vertex measure is quantitatively close to a constant in terms of spectral gap deficit.  
\begin{thm}
\label{UniformProperty}

      Let $G=(V,w,m)\in \mathcal{G}(D,d,\delta)$ be a graph satisfying $CD(K,\infty)$ for some $K>0$ and $\mathop{\min}\limits_{w(x,y)>0}w(x,y) \geq \eta>0$. For any fixed vertex $x_0 \in V(G)$, define the distance function $f_0(\cdot):= \mathrm{dist}(\cdot, x_0)$.  For simplicity, we write $\mathrm{Deg}_{max}(G)$ as $D(G)$. 
      
    There exist constants $\epsilon_1 = \epsilon_1(D, K, d, \delta, \eta) > 0$ and $C = C(D, K, d, \delta, \eta)$ such that for all $0 < \epsilon < \epsilon_1$, if 
    $\lambda_d \leq K + \epsilon$, then the following hold:

\begin{enumerate}[(i)]
    \item $\left|\Gamma f_0(x) -D(G)/2\right|\leq C(D,K,d,\delta,\eta)\sqrt{\epsilon}$, for any  $x\in V(G)$.\label{eq:2}
    \item $\left|\mathrm{Deg}(x)-D(G)\right|\leq C(D,K,d,\delta,\eta)\sqrt{\epsilon}$, for any  $x\in V(G)$.\label{eq:3}
    \item $\left|q(x,y)-K/2\right|\leq C(D,K,d,\delta,\eta)\sqrt{\epsilon}$, for any $q(x,y)>0$.\label{eq:4}
    \item $\left|m(x)-1\right|\leq C(D,K,d,\delta,\eta)\sqrt{\epsilon}$, for any $x\in V(G)$.\label{SameVertex}
\end{enumerate}
\end{thm}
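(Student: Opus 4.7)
The plan is to transfer quantitative properties of the spectral approximant $\tilde f_0 := L_{x_0}^{-1}(f_0|_{B_1(x_0)}) \in \Lambda_0(d)$, supplied by Lemma~\ref{uniformdist}, back to the distance function $f_0$ itself, exploiting both that $\tilde f_0 = f_0$ on $B_1(x_0)$ and that $\|\tilde f_0 - f_0\|_\infty \leq C\sqrt{\epsilon}$. Parts (i) and (ii) will follow from the gradient estimate \eqref{graph gradient estimation}; part (iii) from the approximate spectral identity $\Delta \tilde f_0 + K(\tilde f_0 - \overline{\tilde f_0}) \approx 0$; and part (iv) from propagating the edge-ratio $q(x,y)/q(y,x) = m(y)/m(x)$ along a short path.

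For (i) and (ii) I first observe that, because $\tilde f_0 = f_0$ on $B_1(x_0)$, a direct computation gives $\Gamma \tilde f_0(x_0) = \Gamma f_0(x_0) = \tfrac{1}{2}\mathrm{Deg}(x_0)$. By \eqref{graph gradient estimation}, $\Gamma \tilde f_0$ is $C\sqrt{\epsilon}$-close to this value on all of $V$. Writing $(f_0(y)-f_0(x))^2 - (\tilde f_0(y)-\tilde f_0(x))^2$ as a product of an $O(\sqrt{\epsilon})$ factor and a bounded factor, and using $\|\tilde f_0 - f_0\|_\infty \leq C\sqrt{\epsilon}$, yields $|\Gamma f_0(x) - \Gamma \tilde f_0(x)| \leq C\sqrt{\epsilon}$, hence $|\Gamma f_0(x) - \tfrac{1}{2}\mathrm{Deg}(x_0)| \leq C\sqrt{\epsilon}$ for every $x \in V$. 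To replace $\mathrm{Deg}(x_0)$ by $D(G)$, I pick $y^* \in V$ with $\mathrm{Deg}(y^*) = D(G)$ and apply the same estimate with basepoint $y^*$: evaluating at $x_0$ and noting $(f_{y^*}(y)-f_{y^*}(x_0))^2 \leq 1$ gives $\Gamma f_{y^*}(x_0) \leq \tfrac{1}{2}\mathrm{Deg}(x_0)$, so $D(G) - C\sqrt{\epsilon} \leq \mathrm{Deg}(x_0) \leq D(G)$. This is (ii), and substituting back proves (i).

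For (iii), subtracting (i) from $\tfrac{1}{2}$(ii) yields $\sum_{y \sim x} q(x,y)[1 - (f_0(y)-f_0(x))^2] \leq C\sqrt{\epsilon}$ with every term non-negative. Since $q(x,y) \geq \eta$, for $\epsilon$ small no neighbor $y$ of $x$ can satisfy $f_0(y) = f_0(x)$, so every edge is ``vertical'' with $|f_0(y)-f_0(x)| = 1$, regardless of the basepoint. Next, the spectral decomposition $\tilde f_0 = \sum_{i=0}^d a_i\varphi_i$ together with $\lambda_i - K \leq \epsilon$ gives $\|\Delta\tilde f_0 + K(\tilde f_0 - \overline{\tilde f_0})\|_2 \leq C\epsilon$, which promotes to a pointwise bound by Lemma~\ref{equivalence of norm}. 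Evaluating at $x_0$ (where $\tilde f_0(x_0) = 0$ and $\Delta\tilde f_0(x_0) = \mathrm{Deg}(x_0)$) yields $K\overline{f_0} \approx D(G)$; evaluating at a neighbor $x_1$ of $x_0$ (where $\tilde f_0(x_1) = 1$) yields $\Delta f_0(x_1) \approx \Delta\tilde f_0(x_1) \approx K(\overline{f_0} - 1)$. Verticality lets us split $\Delta f_0(x_1) = -q(x_1,x_0) + \sum_{y \sim x_1,\, f_0(y)=2} q(x_1,y)$ and $\mathrm{Deg}(x_1) = q(x_1,x_0) + \sum_{y \sim x_1,\, f_0(y)=2} q(x_1,y)$; subtracting these gives $2q(x_1,x_0) \approx K\overline{f_0} - K(\overline{f_0}-1) = K$, so $q(x_1,x_0) \approx K/2$. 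The edge $(x_0,x_1)$ being arbitrary, (iii) follows.

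For (iv), (iii) combined with $q(x,y)/q(y,x) = m(y)/m(x)$ gives $|m(y)/m(x) - 1| \leq C\sqrt{\epsilon}$ across every edge; propagating from the distinguished vertex $x^*$ with $m(x^*) = 1$ along a shortest path of length at most $\mathrm{diam}(G) \leq 2D/K$ yields $|m(x) - 1| \leq C\sqrt{\epsilon}$ for all $x \in V$. I expect the main difficulty to be step (iii): one must extract the purely local identity $q(x_1,x_0) \approx K/2$ from the global $\ell_2$-bound on the Laplacian residual $\Delta \tilde f_0 + K(\tilde f_0 - \overline{\tilde f_0})$, and this requires the preliminary ``no horizontal edges'' conclusion (which itself relies on $\eta > 0$) to split $\Delta f_0(x_1)$ and $\mathrm{Deg}(x_1)$ into matched sums whose cancellation isolates $q(x_1,x_0)$.
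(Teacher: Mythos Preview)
Your proposal is correct and follows essentially the same approach as the paper: both combine Lemma~\ref{uniformdist} with the gradient estimate \eqref{graph gradient estimation} for (i)--(ii), use the approximate eigen-equation $\Delta\tilde f_0 + K(\tilde f_0-\overline{\tilde f_0}) \approx 0$ together with the in/out-degree split of $\Delta f_0$ and $\mathrm{Deg}$ for (iii), and propagate $m(y)/m(x)\approx 1$ along paths for (iv). The only differences are organizational: the paper first fixes $x_0$ with $\mathrm{Deg}(x_0)=D(G)$ and derives $d_-^{x_0}(x)=\tfrac{K}{2}\,\mathrm{dist}(x,x_0)+O(\sqrt{\epsilon})$ for general $x$ before specializing to $\mathrm{dist}=1$, whereas you introduce an auxiliary maximizer $y^*$ and evaluate directly at a neighbor $x_1$ of $x_0$; your ``no horizontal edges'' step is the paper's bound $d_0^{x_0}(x)=O(\sqrt{\epsilon})$ made explicit.
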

\begin{proof}
     First, let $\epsilon_1$ be the $\epsilon_1(D,d,\delta,K,\eta)$ in Theorem \ref{bounded norm}. Then, $\tilde{f_0}=L_{x_0}^{-1}(f_0 \vert_{B_1(x_0)})$ exists. We first prove \eqref{eq:2} and \eqref{eq:3} when   $x_0\in V(G)$ such that $Deg(x_0)=D(G)$. 
     
To simplify the calculation, we shall adopt the following notation:
Let $f,g$ be functions defined on a set $E$ with values in $\mathbb{R}$ and $x\in E$. For any $\epsilon>0$, we write $f(x)=g(x)+C[\epsilon]$,  if $\vert f(x)-g(x)\vert\leq C\epsilon$ for some constant $C$.

   \eqref{eq:2}'. By Lemma \ref{uniformdist}, we have
   \begin{equation*}
       \Gamma f_0(x)=\Gamma (f_0-\tilde{f_0}+\tilde{f_0})(x)=\Gamma\tilde{f_0}(x)+C(D,d,\delta,K,\eta)[\sqrt{\epsilon}].
   \end{equation*}
   On the other hand, applying \eqref{graph gradient estimation},  we have 
   \begin{equation*}
       \Gamma f_0=const+C(D,K,d,\delta,\eta)[\sqrt\epsilon].
   \end{equation*}
   Since $D(G)=2\Gamma f_0(x_0)$, we prove \eqref{eq:2} in such $x_0\in V(G)$.

 \eqref{eq:3}'. By \eqref{eq:2},  for any $x\in V(G)$,
   \begin{equation}\label{Dgeq}
       \begin{aligned}
              D(G)+C(D,K,d,\delta,\eta)[\sqrt\epsilon]=&2\Gamma f_0(x)\\
              =&\sum_{\substack{y\sim x\\f_0(y)\neq f_0(x)}}{\frac{w(x,y)}{m(x)}} \\
 =&\mathrm{Deg}(x)-\sum_{f_0(y)=f_0(x)}\frac{w(x,y)}{m(x)}\\
 \leq& \mathrm{Deg}(x).
       \end{aligned}
   \end{equation}
   For convenience, we  define the inner and outer degree w.r.t. $x_0 \in V$ via
$$d_{\pm}^{x_0}(z) := \sum_{\substack{y \sim z \\ d(y, x_0)=d(z, x_0)\pm 1}} \frac{w(y, z)}{m(z)}$$
 and spherical-degree
$$d_{0}^{x_0}(z) := \sum_{\substack{y \sim z \\ d(y, x_0) = d(z, x_0)}} \frac{w(y, z)}{m(z)}.$$
We remark $ \mathrm{Deg}(z)=d_+^{x_0}(z)+d_0^{x_0}(z)+d_-^{x_0}(z)$ and note that $D(G)\geq \mathrm{Deg}(x)$. By \eqref{Dgeq}, we have for any $x\in V(G)$
\begin{equation}
    \begin{aligned}
        \label{Degree}\mathrm{Deg}(x)=&D(G)+C(D,K,d,\delta,\eta)[\sqrt\epsilon]
        \end{aligned}
        \end{equation}
        and
        \begin{equation*}
    \begin{aligned}
        d_0^{x_0}(x)=&\sum_{f_0(y)=f_0(x)}\frac{w(x,y)}{m(x)}=C(D,K,d,\delta,\eta)[\sqrt\epsilon]. 
    \end{aligned}
\end{equation*}
Since $ \mathrm{Deg}(x)=d_+^{x_0}(x)+d_0^{x_0}(x)+d_-^{x_0}(x)$, we obtain 
\begin{equation}
\label{InOUt}
    d_+^{x_0}(x)+d_-^{x_0}(x)=D(G)+C(D,K,d,\delta,\eta)[\sqrt\epsilon].
\end{equation}

Thus we prove \eqref{eq:2} and \eqref{eq:3} when  $x_0\in V(G)$ such that $\mathrm{Deg}(x_0)=D(G)$. Due to \eqref{Degree}, for any $x_0\in V(G)$, the value of $\mathrm{Deg}(x_0)$ only differs from $D(G)$'s by $C_0(D,K,d,\delta,\eta)\sqrt\epsilon$. Repeat the above proofs of \eqref{eq:2}'and \eqref{eq:3}'. Then the above formulas hold for arbitrary $x_0\in V(G)$.

\eqref{eq:4}.  For any $x_0\in V(G)$, by Theorem \ref{bounded norm} and Lemma \ref{uniformdist}, we have  $$f_0=\varphi+C+C(D,K,d,\delta,\eta)[\sqrt{\epsilon}],$$
where $\varphi\in \Lambda(d)$ with $$\Vert\varphi\Vert_2\leq\Vert \tilde f_0\Vert_2\leq C(D,K,d,\delta,\eta)$$ and $C$ is a constant such that $$f_0\vert_{B_1(x_0)}=(\varphi+C)\vert_{B_1(x_0)}.$$
 Therefore,
   \begin{equation}
       \begin{aligned}
           D(G)+C(D,K,d,\delta,\eta)[\sqrt{\epsilon}]=&\mathrm{Deg}(x_0)=\Delta f_0(x_0)=\Delta \varphi(x_0)\\
          =&-K\varphi(x_0)+C(D,K,d,\delta,\eta)[\epsilon].    
       \end{aligned}
   \end{equation}
   Thus, $$C+C(D,K,d,\delta,\eta)[\sqrt{\epsilon}]=(f_0-\varphi)(x_0)=\frac{D(G)}{K}+C(D,K,d,\delta,\eta)[\sqrt{\epsilon}].$$
   Just take C= $\frac{D(G)}{K}$. Therefore, for any $x\in V(G)$,
   \begin{equation}
       \begin{aligned}
           \Delta f_0(x)=\Delta\varphi_0(x)+C(D,K,d,\delta,\eta)[\sqrt{\epsilon}]=&-K(f_0(x)-C)+C(D,K,d,\delta,\eta)[\sqrt{\epsilon}]\\
           =& D(G)-K d(x,x_0)+C(D,K,d,\delta,\eta)[\sqrt{\epsilon}].
       \end{aligned}
   \end{equation}
   We obtain 
   \begin{equation}
       d_+^{x_0}(x)-d_-^{x_0}(x)=D(G)-K d(x,x_0)+C(D,K,d,\delta,\eta)[\sqrt{\epsilon}].
   \end{equation}
   Combined with \eqref{InOUt}, $$d_-^{x_0}(x)=\frac{1}{2}K d(x,x_0)+C(D,d,\delta,K,\eta)[\sqrt{\epsilon}].$$
   For any $x,y \in V(G)$ with $x\sim y$, we have 
   \begin{equation}
       \begin{aligned}
          q(y,x)=\frac{w(x,y)}{m(y)}=&d_-^{x}(y)=\frac{1}{2}K d(x,y)+C(D,K,d,\delta,\eta)[\sqrt{\epsilon}]\\
          =&\frac{K}{2}+C(D,K,d,\delta,\eta)[\sqrt{\epsilon}]. 
       \end{aligned}
   \end{equation}
  \eqref{SameVertex}. Then, we have $$q(x,y)=q(y,x)+C(D,K,d,\delta,\eta)[\sqrt{\epsilon}],$$which yields $$\frac{m(x)}{m(y)}=1+C(D,K,d,\delta,\eta)[\sqrt{\epsilon}].$$  On the other hand, we always assume that there exists some $x_0\in G$ such that $m(x_0)=1$.  Since $G$ is connected and $\mathrm{diam}(G)\leq \frac{2Deg_{max}}{K}\leq \frac{2D}{K}$, we prove \eqref{SameVertex}.
\end{proof}

\section{Quantitative Obata's Theorem}\label{QOT}
First, we prove the almost-rigidity theorem by contradiction.
\begin{thm}[Almost-rigidity theorem]\label{AMT}
   
   Let $G=(V,w,m)\in \mathcal{G}(D,d,\delta)$ be a graph satisfying $CD(K,\infty)$ for some $K>0$ . There exists an $\epsilon_{0}(D,K,d,\delta)$, such that  if $\lambda_{d}(G)\leq K+\epsilon_0$, then
\begin{enumerate}[(i)]
    \item \label{AR-1} $G$ is a hypercube in the combinatorial sense; 
    \item \label{AR-2} there exists an ~$\eta(D,K,d,\delta)>0$, such that $\mathop{\min}\limits_{w(x,y)>0} w(x,y)\geq\eta(D,K,d,\delta)$, for any $w(x,y)\in w(G)$.
\end{enumerate}
\end{thm}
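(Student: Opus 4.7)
The plan is a compactness-and-contradiction argument that reduces the almost-rigidity statement to the discrete Obata rigidity theorem (Theorem \ref{rigiditytheorem}). I would suppose the conclusion fails for some fixed $(D,K,d,\delta)$ and select a sequence $G_n=(V_n,w_n,m_n)\in\mathcal{G}(D,d,\delta)$ with $CD(K,\infty)$ and $\lambda_d(G_n)\to K$ for which either $\tilde G_n\not\cong H_d$ or $\min_{w_n(x,y)>0}w_n(x,y)\to 0$. First, by Theorem \ref{thm:BM} combined with $\mathrm{deg}_{\max}=d$, each $G_n$ has at most $N(D,K,d)$ vertices, so up to relabeling and a subsequence I may assume all $\tilde G_n$ coincide with a fixed connected graph $\tilde G$ on a fixed vertex set $V$; since $w_n(x,y)\in[0,D\delta]$ and $m_n(x)\in[\delta^{-1},\delta]$ lie in compact intervals, a further diagonal subsequence gives pointwise limits $w_n\to w_\infty$ and $m_n\to m_\infty$, yielding $G_\infty:=(V,w_\infty,m_\infty)$.

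Next I would verify that $G_\infty$ satisfies the hypotheses of Theorem \ref{rigiditytheorem}. Because $m_\infty\geq\delta^{-1}>0$, the (symmetrized) Laplacian matrix depends continuously on $(w,m)$, so $\lambda_j(G_n)\to\lambda_j(G_\infty)$ for every $j$ and in particular $\lambda_d(G_\infty)=K$; the curvature condition $CD(K,\infty)$ is a pointwise polynomial inequality in $w$ and $m^{-1}$ and therefore survives. The crucial point is connectedness: the discrete Lichnerowicz estimate gives $\lambda_1(G_n)\geq K$, whence $\lambda_1(G_\infty)\geq K>0$, which rules out disconnection of $G_\infty$ (a disconnected weighted graph has $\lambda_1=0$). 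Setting $d':=\mathrm{deg}_{\max}(G_\infty)\leq d$ (since $\tilde G_\infty$ is a spanning subgraph of $\tilde G$), the sandwich $K\leq\lambda_1(G_\infty)\leq\cdots\leq\lambda_{d'}(G_\infty)\leq\lambda_d(G_\infty)=K$ forces $\lambda_{d'}(G_\infty)=K$, so Theorem \ref{rigiditytheorem} identifies $G_\infty=H_{d'}(K/2)$; the next eigenvalue of the $d'$-hypercube is $2K$, which rules out $d'<d$, so $d'=d$ and $G_\infty=H_d(K/2)$.

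Both contradictions would then close up simultaneously. Combinatorially, $\tilde G_\infty=H_d$ is $d$-regular and contained in $\tilde G$, which has maximum degree $d$; hence $\tilde G=H_d$, contradicting $\tilde G\not\cong H_d$. On the weight side, symmetry of $w_\infty$ forces $m_\infty$ to be constant on the connected graph $\tilde G$, so every edge weight of $G_\infty$ equals $Km_\infty/2\geq K/(2\delta)$; uniform convergence over the finite edge set then gives $w_n(x,y)\geq K/(4\delta)$ for large $n$, contradicting $\min w_n\to 0$. The main obstacle I foresee is precisely the connectedness step: without propagating $\lambda_1\geq K$ to the limit, $G_\infty$ could split into components and one would have to track how the eigenvalue $K$ distributes across pieces that might individually fail to be hypercubes; Lichnerowicz bypasses this delicate multiplicity analysis and reduces the proof to a single invocation of Theorem \ref{rigiditytheorem}.
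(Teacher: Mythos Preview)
Your proposal is correct and follows essentially the same compactness--contradiction route as the paper: bound the diameter, pass to a subsequence with fixed combinatorial structure and convergent weights, then invoke Theorem~\ref{rigiditytheorem} on the limit to force the hypercube structure and positive edge weights. In fact you handle two points more carefully than the paper's own write-up: you use the Lichnerowicz bound $\lambda_1\geq K$ to guarantee connectedness of $G_\infty$, and you explicitly rule out $\mathrm{deg}_{\max}(G_\infty)<d$ via the spectrum of $H_{d'}(K/2)$, whereas the paper applies the rigidity theorem somewhat implicitly before checking these issues.
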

\begin{proof}
    (1)First, we prove \eqref{AR-1}. Suppose not. There exists a sequence of graphs $$\{G_n=(V_n,w_n,m_n)\}_{n=1}^{\infty}$$ such that $\lambda_{d}(G_n)\leq K+\frac{1}{n}$, while $\{G_n\}_{n=1}^{\infty}$ are not hypercubes in the combinatorial sense. Since $\{G_n\}_{n=1}^{\infty}$ are connected and $diam(G_n)\leq \frac{2Deg_{max}(G_n)}{K}\leq \frac{2D}{K}$, there are only finite many combinatorial structures among all $\{G_n\}_{n=1}^{\infty}$. Thus, we may pick an infinite subsequence of $\{G_n\}_{n=1}^{\infty}$( still denoted as $\{G_n\}_{n=1}^{\infty}$) sharing the same combinatorial structure, say $G=( V,E)$. We label vertices and edges in $\{G_n\}_{n=1}^{\infty}$ with $G=( V,E)$.

    By the definition of  $\mathcal{G}$, $w_n$ and $m_n$ are bounded. There exists a subsequence of  $\{G_n\}_{n=1}^{\infty}$ ( still denoted as  $\{G_n\}_{n=1}^{\infty}$), such that $$w_0(x,y):=\lim_{n\rightarrow +\infty}w_n(x,y)$$ and $$m_0(x):=\lim_{n\rightarrow+\infty}m_n(x)$$ exist, for any $x\sim y$ and $x\in V$.

   Thus, we define the weighted graph $G_0:=(V,w_0,m_0)$. By continuity of  spectrum,   $\lambda_{d}(G_0)=K$. By the Rigidity Theorem (see Theorem \ref{rigiditytheorem}) , $G_0$ is a hypercube.  Note that for any $\{x,y\}\in E$,  $w_0(x,y)$ will not vanish, otherwise $deg(x)\leq d-1$.  This contradicts with $G_0$ being a hypercube. Thus G is isomorphic to $G_0$, a hypercube. This contradicts the assumption. Hence, we prove that  such  $\epsilon_0$ exists.

   (2)Now, we prove \eqref{AR-2}. For the above $\epsilon_0$ in \eqref{AR-1}, suppose not. 
   Since, for any $G=(V,w,m)\in \mathcal{G}(D,d,\delta)$ satisfying $CD(K,\infty)$ for some $K>0$  with $\lambda_{d}(G)<K+\epsilon_0$, $G$ is already a hypercube combinatorially, we may pick a subsequence of $\{G_n\}_{n=1}^{\infty}$ such that ${w_n(x',y')}<\frac{1}{n}$, for some $\{x',y'\}\in E$. By the similar argument of  \eqref{AR-1},  $G_0$ fails to be a hypercube, a contradiction. Thus, such $\eta(D,K,d,\delta)$ exists.
\end{proof}
Now, we are ready to prove our main theorem.
\begin{proof}[Proof of Theorem \ref{MainTheorem}]
    We employ a self-improving argument to prove the constant is irrelevant to $\delta,\eta$. By Theorem \ref{AMT}, there exist $\epsilon_{0}(D,K,d,\delta)$ and  $\eta(D,K,d,\delta)$,  if $\lambda_{d}(G)<K+\epsilon_0$, then $G$ is a hypercube combinatorially and $\min_{w(x,y)>0} w(x,y)\geq\eta(D,K,d,\delta)$.

    Applying Theorem \ref{UniformProperty} for the above $\eta(D,K,d,\delta)$, there exists an $$\epsilon_2(D,K,d,\delta):=\mathop{\min}\{\epsilon_1(D,K,d,\delta,\eta(D,K,d,\delta)),\epsilon_{0}(D,K,d,\delta)\},$$such that when $\lambda_d=K+\epsilon\leq K+\epsilon_2$, we obtain 
          $$\left|m(x)-1\right|\leq C(D,K,d,\delta)\sqrt{\epsilon},\forall x\in V(G)$$
     and $$\left|q(x,y)-\frac{K}{2}\right|\leq C(D,K,d,\delta)\sqrt{\epsilon}, \text{whenever}~ q(x,y)>0.$$
    Set $$\epsilon_3(D,K,d,\delta):=\mathop{\min}\{\epsilon_2(D,K,d,\delta),\frac{1}{4}C^{-2}(D,K,d,\delta),\frac{K^2}{16}C^{-2}(D,K,d,\delta)\}.$$We have $$\left|m(x)-1\right|\leq\frac{1}{2},\forall x\in V(G)$$ and $$\left|q(x,y)-\frac{K}{2}\right|\leq\frac{K}{4}, \text{whenever}~ q(x,y)>0.$$  It yields $D(G)\leq \frac{3dK}{4}$. Applying Theorem \ref{UniformProperty} again,  there exists $$\epsilon_4(D,K,d,\delta):=min\{\epsilon_1(D=\frac{3dK}{4},K,d,\delta=2,\eta=\frac{K}{8}),\epsilon_3(D,K,d,\delta)\}.$$ When $\lambda_d(G)=K+\epsilon\leq K+\epsilon_4(D,K,d,\delta)$, we have  \begin{equation}
          \label{lastestmate}\left|m(x)-1\right|\leq C(K,d)\sqrt{\epsilon},\forall x\in V(G)
    \end{equation} and $$\left|q(x,y)-\frac{K}{2}\right|\leq C(K,d)\sqrt{\epsilon}.$$  Since $G$ and $H$ share the same combinatorial structure, there holds
 
    \begin{equation*}
    \begin{aligned}
        \operatorname{dist}_F(G,H)=&\sqrt{\sum_{x\sim y} (q(x,y)-\frac{K}{2})^2}\\
        \leq & C(K,d)\sqrt{\epsilon}.
    \end{aligned}
    \end{equation*}
    We prove the theorem.
\end{proof}
\begin{proof}[Proof of Theorem \ref{MainTheoremDistance}]
    For any fixed $x_0\in V$, set $f_0:=\mathrm{dist}_{x_0}$. By the equivalence of  norms in $\mathcal{G}(D,K,d,\delta)$, we actually show $$\Vert f_0-\tilde{f_0}\Vert_2\leq C(D,K,d,\delta,\eta)\sqrt{\epsilon}$$  in Lemma \ref{uniformdist}. Picking $\epsilon_4(D,d,\delta,K)$ in the proof of Theorem \ref{MainTheorem}, we have $$\Vert f_0-\tilde{f_0}\Vert_2\leq C(K,d)\sqrt{\epsilon},$$ when $\lambda_d=K+\epsilon<K+\epsilon_4(D,d,\delta,K)$.
    Due to the property of Hilbert spaces, there exists a function $u\in \Lambda(d)$ such that$$\Vert f_0-\frac{\Braket{f_0,1}}{\Braket{1,1}}-u\Vert_2\leq C(K,d)\sqrt{\epsilon}.$$
By \eqref{lastestmate}, we have
\begin{equation*}
        \left|\frac{\Braket{f_0,1}}{\Braket{1,1}}-\frac{d}{2}\right|=  \left| \frac{\sum_{x\in V}f_0(x)(m(x)-1)+\sum_{x\in V}f_0(x)}{\sum_{x\in V}(m(x)-1)+\sum_{x\in V}1}-\frac{\sum_{x\in V}f_0(x)}{\sum_{x\in V}1}\right|\leq C(K,d)\sqrt{\epsilon}.
    \end{equation*}
    By the triangle inequality and the equivalence of norm, we obtain the theorem.
\end{proof}

   \subsection{Examples}
   Given two weighted graphs $G_1=(V_1,w_1,m_1\equiv1)$ and $G_2=(V_2,w_2,m_2\equiv1)$, their Cartesian product $G_1\times G_2=(V(G_1)\times V(G_2),w,m)$ is a weighted graph with vertex set $V(G_1) \times V(G_2)$, 
   $$w((u_1,v_1),(u_2,v_2))=
\begin{cases}
w_1(u_1,u_2), & \text{if } v_1=v_2 \text{ and } u_1\sim u_2, \\
w_2(v_1,v_2), & \text{if } u_1=u_2 \text{ and } v_1\sim v_2, \\
0, &otherwise,
\end{cases}
$$
and $m\equiv1$.  Let $\{\alpha_i\}_{i=0}^{|V_1|-1}$ be the sequence of eigenvalues of $G_1$ and $\{\beta_j\}_{j=0}^{|V_2|-1}$ be the sequence of eigenvalues of $G_2$. The eigenvalues of $G_1\times G_2$ are given by $\{\alpha_{i}+\beta_{j},i=0,1\ldots |V_1|-1,j=0,1\ldots|V_2|-1\}$ with corresponding eigenfunctions $\{h_{\alpha_i,\beta_j}\}$ in the form of 
 \begin{equation*}
        \begin{aligned}
            h_{\alpha_i,\beta_j}: V(G_1)\times V(G_2)&\longrightarrow \mathbb{R}\\
                       (x,y)&\longmapsto f_{\alpha_i}(x)g_{\beta_j}(y),
        \end{aligned}
    \end{equation*}
    where $f_{\alpha_i}$ is the corresponding eigenfunction of $\alpha_i $ and $g_{\beta_j}$ is the corresponding eigenfunction of $\beta_j$.
    
    Further, if $G_1$ and $G_2$ satisfy $CD(K_1,\infty)$ and $CD(K_1,\infty)$, respectively. By \cite[Theorem 1.3]{eigenvalueratios}), their Cartesian product $G_1\times G_2$ satisfies $CD(\min\{K_1,K_2\},\infty)$.
    To clarify the examples, we state the following lemma first.
    \begin{lem}
        \label{distlem}
        Let $G_1=(V_1,w_1,m_1\equiv1)$ and $G_2=(V_2,w_2,m_2\equiv1)$ be two connected weighted graphs with $\left|V(G_1)\right|\geq 2$. If 
        \begin{equation*}
        \begin{aligned}
            h: V(G_1)\times V(G_2)&\longrightarrow \mathbb{R}\\
                       (x,y)&\longmapsto g(y), \text{where } g\in C(V_2)  
        \end{aligned}
    \end{equation*}
    can be expressed as a composite function of distance from any fixed vertex $(x_0,y_0)$ in $G_1 \times G_2$, that is, $h(\cdot)=w(\mathrm{dist}_{(x_0,y_0)}(\cdot))$, then $h\equiv w(0)$, a constant.
    \end{lem}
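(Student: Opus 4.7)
The plan is to exploit the additive decomposition of the distance on the Cartesian product, $\mathrm{dist}_{G_1\times G_2}((x_0,y_0),(x,y)) = \mathrm{dist}_{G_1}(x_0,x) + \mathrm{dist}_{G_2}(y_0,y)$, which follows directly from the definition of the edge set of $G_1\times G_2$ by concatenating shortest paths coordinate-wise. Combined with the fact that $h(x,y)=g(y)$ does not depend on the first coordinate at all, this additivity will force the scalar function $w$ to be constant on the range of distances that actually appear, and the conclusion will be immediate.

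Concretely, I would rewrite the hypothesis $h(\cdot)=w(\mathrm{dist}_{(x_0,y_0)}(\cdot))$ as
\begin{equation*}
g(y) \;=\; w\bigl(\mathrm{dist}_{G_1}(x_0,x) + \mathrm{dist}_{G_2}(y_0,y)\bigr) \qquad \text{for every } (x,y)\in V_1\times V_2.
\end{equation*}
Because $|V(G_1)|\geq 2$ and $G_1$ is connected, there exists $x_1\in V_1$ with $\mathrm{dist}_{G_1}(x_0,x_1)=1$. Substituting first $x=x_0$ and then $x=x_1$ produces the two identities
\begin{equation*}
g(y) \;=\; w\bigl(\mathrm{dist}_{G_2}(y_0,y)\bigr) \;=\; w\bigl(1+\mathrm{dist}_{G_2}(y_0,y)\bigr),
\end{equation*}
valid for every $y\in V_2$. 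Letting $y$ range over $V_2$ and writing $e_2(y_0):=\max_{y\in V_2}\mathrm{dist}_{G_2}(y_0,y)$, this yields $w(j)=w(j+1)$ for every $j\in\{0,1,\dots,e_2(y_0)\}$, so $w$ is constant on $\{0,1,\dots,e_2(y_0)+1\}$.

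Feeding this constancy back into the first identity gives $g(y)=w(0)$ for every $y\in V_2$, whence $h\equiv w(0)$ on the whole product. There is no real obstacle: the only non-trivial ingredient is the additivity of the Cartesian product distance, which is a textbook fact, and the degenerate case $|V(G_2)|=1$ is handled automatically since then $g$ is already constant and the identity $g(y_0)=w(\mathrm{dist}_{G_1}(x_0,x))$ for all $x$ directly gives $g(y_0)=w(0)$.
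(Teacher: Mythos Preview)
Your proof is correct and follows essentially the same approach as the paper: both exploit the additivity of the Cartesian-product distance and compare $h(x,y)=g(y)$ at different first coordinates to force $w$ to be constant. Your version is slightly more economical, using only a single neighbour $x_1$ of $x_0$ (distance $1$) to obtain the one-step recursion $w(j)=w(j+1)$, whereas the paper uses vertices at every distance up to the eccentricity $e_1$ to get $w(k)=w(\max\{k-e_1,0\})$; the underlying idea is the same.
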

    \begin{proof}
        We set $e_1:=\max_{x\in V(G_1)}{\mathrm{dist}(x,x_0)}\geq 1$ and $e_2:=\max_{y\in V(G_2)}{\mathrm{dist}(y,y_0)}$. For any integer $0\leq k\leq e_1+e_2$, it is direct to find a vertex $(x,y)$ such that $\mathrm{dist}_{G_1}(x_0,x)=\min\{k,e_1\}$ and $\mathrm{dist}_{G_2}(y_0,y)=\max\{k-e_1,0\}$. We have $$w(k)=h(x,y)=g(y)=h(x_0,y)=w(\max\{k-e_1,0\}).$$Since $G_1\times G_2$ is connected, it yields $w(k)\equiv w(0)$.
    \end{proof}
Now, we are ready to give some examples.
\begin{eg}\label{exampleld}
 
    Let $G=(V,w,m)\in \mathcal{G}(D,d,\delta)$ satisfying $CD(K,\infty)$. If we only assume the first $l\leq d-1$ eigenvalues reach the Lichnerowicz sharpness, there exist graphs such that the first eigenfunction cannot be expressed as a function of distance. Examples are illustrated as follows.
     
   The Cartesian product $G:=H_n(2)\times H_l(1)$, where $n,l\geq 1$ be integers. Note that $H_d(K/2)$ satisfies $CD(K,\infty)$. Therefore, the graph $G$ satisfies $CD(2,\infty)$ with $deg_{max}(G)=n+l$ and its first non-zero Laplacian eigenvalue is equal to $2$ with multiplicity $l$, i.e.
\[2=\lambda_1(G)=\cdots=\lambda_{l}(G)<\lambda_{l+1}(G).\] Since the non-zero eigenvalues of $H_n(2)$ are greater than  2, the corresponding eigenfunction $h$ of  $\lambda=2$ in $H_n \times H_l$ could only be of the form $h=g(y)$, where $g(y)$ is an eigenfunction of  non-zero eigenvalue 2 of $H_l(1)$. Suppose $h$ can be expressed as a function of distance. By Lemma \ref{distlem}, $h$ is a constant, a contradiction.
 \end{eg}
\begin{eg}
   \label{examplecube}
   Let $d>1$ be an integer. There exist eigenfunctions of $H_d$ corresponding to $\lambda_1(H_d)=2$ which can not be expressed as a function of distance.
   
We write $H_d=H_{d-l}\times H_{l}$, where $0<l<d$ is an integer. $H_d$ satisfies  $CD(2,\infty)$. Then, by Lemma \ref{distlem}, any eigenfunction of $ H_d$ in the form of  $h=g(y)$, where $g(y)$ is an eigenfunction of eigenvalue 2 in $H_l$, cannot be a function of distance.
\end{eg}

\printbibliography
\end{document}